\documentclass[11pt]{article}

\usepackage[a4paper,margin=1in]{geometry}
\usepackage{amsmath,amssymb,amsthm,mathtools}
\usepackage{enumitem}
\usepackage{microtype}
\usepackage[colorlinks=true,citecolor=blue,linkcolor=blue,urlcolor=blue]{hyperref}

\allowdisplaybreaks[3]
\AtBeginDocument{%
  \setlength{\abovedisplayskip}{6pt plus 2pt minus 3pt}%
  \setlength{\belowdisplayskip}{6pt plus 2pt minus 3pt}%
  \setlength{\abovedisplayshortskip}{4pt plus 2pt minus 2pt}%
  \setlength{\belowdisplayshortskip}{4pt plus 2pt minus 2pt}%
}

\makeatletter
\renewcommand\section{\@startsection{section}{1}{\z@}%
  {-1.8ex plus -0.5ex minus -0.2ex}%
  {0.8ex plus 0.2ex}%
  {\normalfont\large\bfseries}}

\renewcommand\subsection{\@startsection{subsection}{2}{\z@}%
  {-1.4ex plus -0.4ex minus -0.2ex}%
  {0.6ex plus 0.2ex}%
  {\normalfont\normalsize\bfseries}}

\renewcommand\subsubsection{\@startsection{subsubsection}{3}{\z@}%
  {-1.0ex plus -0.3ex minus -0.2ex}%
  {0.5ex plus 0.2ex}%
  {\normalfont\normalsize\itshape}}
\makeatother

\newtheorem{theorem}{Theorem}[section]
\newtheorem{proposition}[theorem]{Proposition}
\newtheorem{lemma}[theorem]{Lemma}
\newtheorem{corollary}[theorem]{Corollary}

\theoremstyle{definition}
\newtheorem{definition}[theorem]{Definition}

\theoremstyle{remark}
\newtheorem{remark}[theorem]{Remark}

\title{Extremal Deletion-Ball Intersections under Run-Count and Lower-Order Deletion-Ball Intersection Constraints\thanks{This research was supported by the National Key Research and Development Program of China under Grant 2025YFC3409900, the National Natural Science Foundation of China under Grant 12231014, and Beijing Scholars Program.}}

\author{
Yubo Sun\thanks{
Institute of Mathematics and Interdisciplinary Sciences,
Xidian University, Xi'an 710126, China.
Email: \texttt{ybsun@cnu.edu.cn}.
}
\and
Gennian Ge\thanks{
Corresponding author.
School of Mathematical Sciences,
Capital Normal University, Beijing 100048, China.
Email: \texttt{gnge@zju.edu.cn}.
}
}

\date{}

\begin{document}

\maketitle

\begin{abstract}
Motivated by sequence reconstruction and reconstruction codes, we study extremal intersections of deletion balls over a fixed $q$-ary alphabet.
Let $\Sigma_q^n$ be the set of sequences of length $n$ over
$\Sigma_q$, and let $D_t(x)$ denote the set of all sequences obtained from $x\in\Sigma_q^n$ by deleting exactly $t$ symbols.

Our first result gives a finite upper bound under a lower-order deletion-correction constraint. We prove that if
$x,y\in\Sigma_q^n$ satisfy $D_{s-1}(x)\cap D_{s-1}(y)=\varnothing$, then
\[
|D_t(x)\cap D_t(y)|
\le
\binom{2s}{s}\binom{n-s}{t-s}.
\]
For binary alphabets, this strengthens a recent asymptotic upper bound of Pham, Goyal, and Kiah (2025, JCTA).
We then investigate deletion-ball intersections under simultaneous constraints on run counts and lower-order deletion-ball intersections.
For fixed $0<\gamma\le1$, integers $1\le s\le t$, and $m\ge1$, we show that if $x,y\in\Sigma_q^n$ have at most $\gamma n$ runs and satisfy $|D_s(x)\cap D_s(y)|\le m$, then
\[
|D_t(x)\cap D_t(y)|\le
\frac{m\gamma^{t-s}}{(t-s)!}n^{t-s}+O_{s,t,m}(n^{t-s-1}).
\]
Moreover, the leading term can be attainable whenever $m$ is realized by a fixed finite-length seed pair. 
As a consequence, we obtain a direct lifting theorem for deletion reconstruction codes, transferring reconstruction properties from radius $s$ to larger radii $t$.
Finally, we establish a parallel insertion theory and derive corresponding results for insertion-ball intersections and insertion
reconstruction codes.
\end{abstract}

\noindent\textbf{Keywords.}
Sequence reconstruction; reconstruction code; deletion ball; common subsequence

\medskip

\section{Introduction}

The study of intersections of error balls dates back to Levenshtein's foundational work on the sequence reconstruction problem
\cite{Levenshtein-01-IT,Levenshtein-01-JCTA}. In the reconstruction setting, a codeword is transmitted through several identical noisy channels, and the receiver attempts to recover the original codeword from a collection of distinct channel outputs. The maximum intersection size of error balls is therefore a central quantity: it determines the reconstruction threshold, namely, the minimum number of distinct noisy
reads that guarantees unique reconstruction. Motivated by emerging storage technologies such as DNA storage \cite{Church-12-science,Goldman-13-Nature} and racetrack memory \cite{Chee-18-IT}, which naturally provide multiple noisy reads, the
extremal behavior of deletion-ball and insertion-ball intersections has
attracted considerable attention in recent years \cite{Gabrys-18-IT,Chrisnata-22-IT,Sala-17-IT,Sun-26-IT,Ye-23-IT,Pham-25-JCTA,Wang-26-IT,Wang-26-arXiv}.

Let $\Sigma_q$ be an alphabet of size $q\ge 2$, and let $\Sigma_q^n$ denote the set of all sequences of length $n$ over $\Sigma_q$. For two sequences $x\in\Sigma_q^n$ and $y\in\Sigma_q^m$, we write $y\preccurlyeq x$ if $y$ is a \emph{subsequence} of $x$, that is, if $y$ can be obtained from $x$ by deleting exactly $n-m$ symbols. In this
case, $x$ is called a \emph{supersequence} of $y$.

For an integer $t\ge 0$, the \emph{$t$-deletion ball} of $x\in\Sigma_q^n$ is defined as
\[
D_t(x)=\{w\in\Sigma_q^{n-t}: w\preccurlyeq x\}.
\]
Thus $D_t(x)$ consists of all distinct sequences obtained from $x$ by deleting exactly $t$ symbols. Deletion balls are irregular: their sizes depend on the structure of the center sequence. Let
\[
D_q(n,t)=\max_{x\in\Sigma_q^n}|D_t(x)|
\]
be the largest size of a $t$-deletion ball centered at a $q$-ary sequence of length $n$. Calabi \cite{Calabi1967} and later Hirschberg and R{\'e}gnier \cite{Hirschberg} determined that
\[
D_q(n,t)=\sum_{i=0}^t \binom{n-t}{i}D_{q-1}(t,t-i).
\]

The \emph{$t$-insertion ball} of $x\in\Sigma_q^n$ is defined as
\[
I_t(x)=\{w\in\Sigma_q^{n+t}: x\preccurlyeq w\},
\]
namely, the set of all distinct sequences obtained from $x$ by inserting exactly $t$ symbols. In contrast to deletion balls, insertion balls are regular: their sizes depend only on $n,q$, and $t$, and not on the particular center. Levenshtein \cite{Levenshtein-66} proved that, for
every $x\in\Sigma_q^n$,
\[
I_q(n,t):=|I_t(x)|
=
\sum_{i=0}^t \binom{n+t}{i}(q-1)^i.
\]

For fixed positive integer $t$, a basic extremal problem is to determine the largest
possible values of
\[
|D_t(x)\cap D_t(y)|
\qquad\text{and}\qquad
|I_t(x)\cap I_t(y)|
\]
under suitable restrictions on the pair $x,y$.

\subsection{Previous Work}

In his seminal work \cite{Levenshtein-01-JCTA}, Levenshtein determined these quantities in the unconstrained case:
\begin{align*}
\max_{\substack{x,y\in\Sigma_q^n\\x\ne y}}
|D_t(x)\cap D_t(y)|
&=
D_q(n,t)-D_q(n-1,t)+D_q(n-2,t-1)  \\
&=
\frac{2}{(t-1)!}n^{t-1}+O_{q,t}(n^{t-2}),
\end{align*}
\[
\max_{\substack{x,y\in\Sigma_q^n\\x\ne y}}
|I_t(x)\cap I_t(y)|
=
2\sum_{i=1}^t I_q(n,t-i)(q-2)^i
=
\frac{2(q-1)^{t-1}}{(t-1)!}n^{t-1}
+
O_{q,t}(n^{t-2}).
\]
Thus, for arbitrary distinct centers, the largest intersection is of order $n^{t-1}$.

A natural way to reduce the intersection between two error balls is to impose a error-correction separation condition on the two centers. For sequences of the same length, $(s-1)$-deletion correction is equivalent to
$(s-1)$-insertion correction; that is,
\[
D_{s-1}(x)\cap D_{s-1}(y)=\varnothing
\qquad\Longleftrightarrow\qquad
I_{s-1}(x)\cap I_{s-1}(y)=\varnothing.
\]
Such a condition arises naturally in synchronization-error correction and
DNA storage. Here $s\ge 1$, and the case $s=1$ reduces exactly to the
unconstrained condition $x\ne y$. Under this separation condition, the
order of the higher-order intersection drops from $n^{t-1}$ in the unrestricted case to $n^{t-s}$.

Let $t\ge s\ge 2$ be fixed. For insertion balls, Sala, Gabrys, Schoeny, and Dolecek~\
\cite{Sala-17-IT} proved that
\[
\max_{\substack{x,y\in\Sigma_q^n\\I_{s-1}(x)\cap I_{s-1}(y)=\varnothing}}
|I_t(x)\cap I_t(y)|=
\frac{(q-1)^{t-s}\binom{2s}{s}}{(t-s)!}
n^{t-s} + O_{q,s,t}(n^{t-s-1}).
\]
For deletion balls, the problem is more delicate, due in part to the irregularity of deletion balls. In the binary case with $s=2$, Gabrys and Yaakobi \cite{Gabrys-18-IT} showed that
\[
\max_{\substack{x,y\in\Sigma_2^n\\D_1(x)\cap D_1(y)=\varnothing}}
|D_t(x)\cap D_t(y)|
=
\frac{6}{(t-2)!}n^{t-2}+O_t(n^{t-3}).
\]
More recently, Wang, Li, and Fu \cite{Wang-26-IT} extended this result to ternary alphabets. For general $s\ge 2$, Pham, Goyal, and Kiah~\cite{Pham-25-JCTA} asymptotically solved the binary case by proving
\[
\max_{\substack{x,y\in\Sigma_2^n\\
D_{s-1}(x)\cap D_{s-1}(y)=\varnothing}}
|D_t(x)\cap D_t(y)|
=
\frac{\binom{2s}{s}}{(t-s)!}n^{t-s}
+
O_{s,t}(n^{t-s-1}).
\]
For general alphabets, Sun and Ge \cite{Sun-26-IT} recently obtained an upper bound by constructing an injection from a deletion-ball intersection into the corresponding insertion-ball intersection.
Combining this injection with known results on insertion-ball intersections, they showed that
\[
\max_{\substack{x,y\in\Sigma_q^n\\
D_{s-1}(x)\cap D_{s-1}(y)=\varnothing}}
|D_t(x)\cap D_t(y)|
\le
\frac{(q-1)^{t-s}\binom{2s}{s}}{(t-s)!}
n^{t-s}
+
O_{q,s,t}(n^{t-s-1}).
\]
However, this bound is not tight in general, even for $q=3$ \cite{Wang-26-IT}. Obtaining the correct leading term for non-binary
deletion-ball intersections therefore remains open.

\subsection{Overview and Main Results}

The goal of this paper is to address the above leading-term problem and, more generally, to study deletion-ball intersections in a refined setting. Rather than assuming only that a lower-order intersection is empty, we ask how an upper bound on a lower-order intersection controls higher-order intersections. More precisely, suppose that
\[
|D_s(x)\cap D_s(y)|\le m
\]
for some fixed integer $m$. How large can $|D_t(x)\cap D_t(y)|$ be for
$t\ge s$? The special case $s=1$ and $m=1$ was studied in \cite{Chrisnata-22-IT}, and the corresponding insertion analogue was
considered in \cite{Ye-23-IT,Sun-26-IT}. For general parameters, however, this problem remains open.

We also incorporate a run-count constraint. Let $r(x)$ denote the number of runs of $x$. Such constraints are natural in deletion channels because the size of a deletion ball is governed by the number of runs of its
center. For fixed $0<\gamma\le1$ and integers $1\le s\le t$ and $m\geq 1$, one of our main results determines the leading term of 
\begin{equation*}
\max_{\substack{x,y\in\Sigma_q^n\\
r(x),\,r(y)\le \gamma n\\
|D_s(x)\cap D_s(y)|\le m}}
|D_t(x)\cap D_t(y)|.
\end{equation*}
The leading coefficient is governed by two transparent parameters: the number $m$ of common lower-order deletion descendants and the run-density parameter $\gamma$.

We now state our main results. The first result gives a finite upper bound under the classical lower-order deletion-correction condition.

\begin{theorem}\label{thm:finite-bound}
Let $x,y\in\Sigma_q^n$, and let $1\le s\le t\le n$. If
\[
D_{s-1}(x)\cap D_{s-1}(y)=\varnothing,
\]
then
\[
|D_t(x)\cap D_t(y)|
\le
\binom{2s}{s}\binom{n-s}{t-s}.
\]
In particular, if $s<t$ are fixed, then there exist constants $K>0$ and $n_0$, depending only on $s$ and $t$, such that for all $n\ge n_0$,
\[
|D_t(x)\cap D_t(y)|
\le
\frac{\binom{2s}{s}}{(t-s)!}n^{t-s}-K n^{t-s-1}.
\]
\end{theorem}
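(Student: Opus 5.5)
The plan is to prove a more general statement about common subsequences of two sequences whose lengths may differ, by induction on the total length, and then specialize. For $x\in\Sigma_q^a$ and $y\in\Sigma_q^b$, let $\mathrm{CS}_\ell(x,y)$ denote the set of common subsequences of length $\ell$. Suppose $\mathrm{LCS}(x,y)\le L\le\min(a,b)$, and put $i=a-L\ge0$ and $j=b-L\ge0$. The claim is
\[
|\mathrm{CS}_\ell(x,y)|\le \binom{i+j}{i}\binom{L}{\ell}.
\]
The theorem is the case $a=b=n$, $L=n-s$ and $\ell=n-t$. This case applies because $D_{s-1}(x)\cap D_{s-1}(y)=\varnothing$ means exactly that $\mathrm{LCS}(x,y)\le n-s$. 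Then $i=j=s$, and $\binom{L}{\ell}=\binom{n-s}{n-t}=\binom{n-s}{t-s}$.

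The induction splits on the first symbols of $x=x_1x'$ and $y=y_1y'$; both recombination steps use Pascal's rule.
\begin{itemize}[leftmargin=*]
\item \textbf{Case $x_1=y_1=c$.} A $w\in\mathrm{CS}_\ell(x,y)$ either starts with $c$ or does not. If $w=cw'$, then $w'\in\mathrm{CS}_{\ell-1}(x',y')$. Moreover $\mathrm{LCS}(x',y')\le L-1$, since prepending $c$ to a common subsequence of $x'$ and $y'$ gives a longer one for $x,y$. The deficits are still $(i,j)$, so these $w$ contribute at most $\binom{i+j}{i}\binom{L-1}{\ell-1}$. If $w$ does not start with $c$, its greedy embeddings avoid the first positions, so $w\in\mathrm{CS}_\ell(x',y')$. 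These contribute at most $\binom{i+j}{i}\binom{L-1}{\ell}$. Pascal's rule for $\binom{L}{\ell}$ closes this case.
\item \textbf{Case $x_1\ne y_1$.} Every $w$ fails to start with $x_1$ or fails to start with $y_1$. Hence $w\in\mathrm{CS}_\ell(x',y)\cup\mathrm{CS}_\ell(x,y')$. Both pairs still have LCS at most $L$, with deficits $(i-1,j)$ and $(i,j-1)$. The sum $\binom{i+j-1}{i-1}+\binom{i+j-1}{i}=\binom{i+j}{i}$ then gives the bound.
\end{itemize}

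The base cases are straightforward. If $\ell>L$ the count is $0$. If $\ell=0$ it is $1$. If $a=0$ or $b=0$ then $L=0$.

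The main obstacle is the boundary of the second case, where $i=0$ or $j=0$ and a deficit would become negative. Say $i=0$, so $L=a$. Then $\mathrm{LCS}(x,y)=a$ forces $x\preccurlyeq y$. Since $x_1\ne y_1$, the greedy embedding of $x$ into $y$ skips $y_1$, so $x\preccurlyeq y'$. Every common subsequence of $x,y$ is then a subsequence of $x$, hence of $y'$. Therefore $\mathrm{CS}_\ell(x,y)=\mathrm{CS}_\ell(x,y')$. This pair has LCS bound $L$ and deficits $(0,j-1)$, and $\binom{j-1}{0}=\binom{j}{0}$, so the bound holds. The case $j=0$ is symmetric. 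I would check these edge cases carefully, and also that $L\le\min(a,b)$ is preserved in each recursive call.

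For the asymptotic statement, expand with $k=t-s\ge1$:
\[
\binom{n-s}{k}=\frac{(n-s)(n-s-1)\cdots(n-s-k+1)}{k!}=\frac{n^{k}}{k!}-\frac{\sum_{r=0}^{k-1}(s+r)}{k!}\,n^{k-1}+O_{s,t}(n^{k-2}).
\]
The coefficient of $n^{k-1}$ is strictly negative. Multiplying by $\binom{2s}{s}$, any $K$ smaller than $\binom{2s}{s}\sum_{r=0}^{k-1}(s+r)/k!$ works for all sufficiently large $n$.
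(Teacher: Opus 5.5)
Your induction is the same as the paper's proof of Lemma~\ref{lem:barrier-shadow}. You use the same split on first symbols, the same Pascal recombinations, the same specialization $L=n-s$, $\ell=n-t$, and the same second-order expansion. However, the boundary step you flag as the main obstacle is argued incorrectly.

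When $i=0$ you only know $L=a$ and $\mathrm{LCS}(x,y)\le a$, and that inequality is automatic. Nothing forces $\mathrm{LCS}(x,y)=a$. So $x\preccurlyeq y$ need not hold, and the identity $\mathrm{CS}_\ell(x,y)=\mathrm{CS}_\ell(x,y')$ can fail. Take $x=01$, $y=100$, $L=2$. These satisfy your hypotheses with $(i,j)=(0,1)$ and $x_1\ne y_1$. Yet $\mathrm{LCS}(x,y)=1$, and
\[
\mathrm{CS}_1(x,y)=\{0,1\}\ne\{0\}=\mathrm{CS}_1(x,y').
\]
This is not an artificial configuration. It is reached after one step of your recursion from $x=001$, $y=100$ with $s=1$, $t=2$. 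The reason is that the $x_1\ne y_1$ branch lowers a deficit while keeping $L$ fixed, so deficit zero with $\mathrm{LCS}<L$ occurs routinely.

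The repair is the trivial count, which is how the paper treats deficit zero: as an immediate base case. If $i=0$, then $|x|=L$. Every element of $\mathrm{CS}_\ell(x,y)$ is the subsequence of $x$ picked out by some $\ell$-subset of its $L$ positions. Hence
\[
|\mathrm{CS}_\ell(x,y)|\le\binom{L}{\ell}=\binom{j}{0}\binom{L}{\ell},
\]
and the case $j=0$ is symmetric. With deficit zero handled this way, no recursion is needed there. The rest of your argument then goes through unchanged, since the $x_1\ne y_1$ branch is only invoked when $i,j\ge1$.
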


For binary alphabets, Theorem~\ref{thm:finite-bound} strengthens the asymptotic upper bound of Pham, Goyal, and Kiah \cite{Pham-25-JCTA}. Moreover, for general alphabets, it improves the upper bound of Sun and Ge \cite{Sun-26-IT} by a factor of $(q-1)^{t-s}$ in the leading term.

Our second result treats the setting in which both the run counts and the lower-order deletion-ball intersections are constrained.

\begin{theorem}\label{thm:run-upper}
Fix $0<\gamma\le 1$ and integers $q\ge 2$, $1\le s\le t$, and $m\geq 1$. As $n\to\infty$,
\[
\max_{\substack{x,y\in\Sigma_q^n\\
r(x),\,r(y)\le \gamma n\\
|D_s(x)\cap D_s(y)|\le m}}
|D_t(x)\cap D_t(y)|
\le
\frac{m\gamma^{t-s}}{(t-s)!}n^{t-s} +
O_{s,t,m}\!\left(n^{t-s-1}\right).
\]
\end{theorem}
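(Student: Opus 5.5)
Our plan is to reduce everything to counting how each common $t$-subsequence arises from a common $s$-subsequence by further deletions. The starting observation is the identity
\[
D_t(x)\cap D_t(y)=\bigcup_{w\in D_s(x)\cap D_s(y)} D_{t-s}(w)\ \cup\ \mathcal{E},
\]
where $\mathcal{E}$ is the set of common $t$-subsequences $z$ that are \emph{not} obtained from any common $s$-subsequence. The first part is at most $m\cdot\max_{w}|D_{t-s}(w)|$. Here $w\in\Sigma_q^{n-s}$ is a subsequence of $x$, so $r(w)\le r(x)\le\gamma n$ (deletions never increase the run count). The classical run bound $|D_k(w)|\le\binom{r(w)+k-1}{k}$ then gives
\[
|D_{t-s}(w)|\le\binom{\gamma n+t-s-1}{t-s}=\frac{\gamma^{t-s}}{(t-s)!}n^{t-s}+O(n^{t-s-1}).
\]
This yields exactly the claimed leading term $m\gamma^{t-s}n^{t-s}/(t-s)!$.

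The remaining work, and the main obstacle, is to show $|\mathcal{E}|=O_{s,t,m}(n^{t-s-1})$. We would attack it structurally. Write $x=u\,x'\,v$ and $y=u\,y'\,v$ with $u$ the longest common prefix and $v$ the longest common suffix, so that $x'$ and $y'$ differ in their first and last symbols. A common subsequence $z$ of length $n-t$ is obtained by embeddings into $x$ and $y$ with $t$ deleted positions each. The key claim is the following: if $z$ admits embeddings whose deletion pattern in the ``long agreeing stretches'' can be synchronized except for at most $s$ deletions in each sequence, then $z$ descends from some common $s$-subsequence. We would formalize this by considering an alignment (a common-subsequence matching) and grouping deleted positions according to whether they fall in regions where $x$ and $y$ are locally identical up to shift. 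Deletions in such synchronized regions can be postponed, meaning they are performed after first reaching a common $s$-subsequence.

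Hence $z\in\mathcal{E}$ forces at least $s+1$ ``essential'' deletions in a bounded-complexity part of $x$ or $y$. To make this quantitative, we would use the hypothesis $|D_s(x)\cap D_s(y)|\le m$ to show that $x$ and $y$ differ only within a window of length $L=L(s,m)$, or more generally within boundedly many windows, after removing a common skeleton. The argument is that if the differing region were long, or if there were many disjoint differing regions, one could produce more than $m$ distinct common $s$-subsequences by varying where the cheap deletions occur. This uses the same kind of argument that, via Theorem~\ref{thm:finite-bound} for the case $s-1$, shows intersections blow up polynomially.

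Given such a bounded confinement, each $z\in\mathcal{E}$ is determined by the at most $s+1$ essential deletion positions, lying in $O(L)$ choices, together with at most $t-s-1$ free deletions elsewhere. This gives $|\mathcal{E}|\le C(s,t,m)\,n^{t-s-1}$. The confinement lemma is the step we expect to be hardest. If it fails in the naive form, we would instead bound $|\mathcal{E}|$ by induction on $t$ (applying the statement with $t-1$) combined with Theorem~\ref{thm:finite-bound}. Since $m\ge1$ is finite but $D_{s-1}$ may intersect, we would also use the unconstrained Levenshtein bound of order $n^{t-1}$ applied to bounded-length pieces.
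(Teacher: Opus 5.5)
Your treatment of the main term is correct and coincides with the paper's. The common $t$-subsequences lying below some $w\in D_s(x)\cap D_s(y)$ number at most $m\max_w|D_{t-s}(w)|$, and $r(w)\le r(x)\le\gamma n$ together with Levenshtein's run bound gives $\frac{m\gamma^{t-s}}{(t-s)!}n^{t-s}+O(n^{t-s-1})$.

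\textbf{The gap.} The gap is the bound $|\mathcal E|=O_{s,t,m}(n^{t-s-1})$, which your proposal does not establish.

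\emph{The confinement lemma is unproved, and its proposed justification is wrong.} The hypothesis $|D_s(x)\cap D_s(y)|\le m$ is vacuous whenever $D_s(x)\cap D_s(y)=\varnothing$. So it cannot force $x$ and $y$ to differ only in a window of length $L(s,m)$. For example, take $s=1$, $x=0u0$, $y=1u1$ with $u$ arbitrarily long: counting zeros shows $D_1(x)\cap D_1(y)=\varnothing$, yet $x$ and $y$ differ at both ends. Whatever boundedness exists (at most $t$ deletions and $t$ insertions separate $x$ from $y$) comes from $D_t(x)\cap D_t(y)\ne\varnothing$, not from $m$.

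\emph{Counting by positions fails even with a good alignment.} Suppose you had an alignment with boundedly many indels. Counting $z\in\mathcal E$ by the \emph{positions} of its essential deletions still does not work, because those positions are not confined to any window. Take $x=0^k110^k$, $y=0^{2k+2}$, $s=1$, $t=2$. Then $D_1(x)\cap D_1(y)=\varnothing$ and $\mathcal E=\{0^{2k}\}$. Yet the two deletions in $y$ producing $0^{2k}$ can be any of the $\binom{2k+2}{2}$ pairs of positions. A positional count therefore needs a canonical embedding whose essential deletions provably land in the windows, and that is exactly the ``postponing'' claim you leave heuristic.

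\emph{The fallback does not close the gap.} Theorem~\ref{thm:finite-bound} requires $D_{s-1}(x)\cap D_{s-1}(y)=\varnothing$, which is not assumed here. The fallback is not specific enough to replace the missing step.

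\textbf{The missing idea.} Count distinct \emph{sequences} rather than positions; this needs no structural information about $x$ and $y$.

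For $z\in\mathcal E$, let $j>s$ be least such that $z\preccurlyeq z'$ for some common subsequence $z'$ of length $n-j$; $j=t$, $z'=z$ works. By minimality of $j$, and since $z$ lies below no element of $D_s(x)\cap D_s(y)$, $z'$ lies below no common subsequence of length $n-j+1$. So $z'$ is a \emph{maximal} common subsequence.

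Two sequences $u=\alpha u'$ and $v=\beta v'$ of lengths $N+a$ and $N+b$ have at most $\binom{a+b}{a}$ maximal common subsequences of length $N$. This follows by induction on the first symbols:
\begin{itemize}
\item If $\alpha=\beta$, every maximal one starts with $\alpha$, and its tail is maximal for $(u',v')$.
\item If $\alpha\ne\beta$, those not starting with $\alpha$ are maximal for $(u',v)$, and the rest are maximal for $(u,v')$.
\end{itemize}
Hence
\[
|\mathcal E|\le\sum_{j=s+1}^{t}\binom{2j}{j}\binom{n-j}{t-j}=O_{s,t}\bigl(n^{t-s-1}\bigr).
\]
This bound uses neither $m$ nor the run constraint; those enter only in the main term.
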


Theorem~\ref{thm:run-upper} has a direct consequence for reconstruction codes.

\begin{definition}
An \emph{$(n,q,N;D_t)$-reconstruction code} is a code $\mathcal C\subseteq\Sigma_q^n$ such that $|D_t(x)\cap D_t(y)|<N$ for any two distinct codewords $x,y\in\mathcal C$.
\end{definition}

As an immediate consequence of the definition and Theorem~\ref{thm:run-upper}, we obtain the following lifting result.

\begin{corollary}\label{cor:deletion-reconstruction-lift}
Fix $0<\gamma\le 1$ and integers $q\ge 2$, $1\le s\le t$, and $m\geq 1$. Let $\mathcal C\subseteq\Sigma_q^n$ be an
$(n,q,m+1;D_s)$-reconstruction code satisfying $r(x)\le \gamma n$ for every $x\in\mathcal C$. Then $\mathcal C$ is also an $(n,q,N+1;D_t)$-reconstruction code, where, as $n\to\infty$,
\[
N
\le
\frac{m\gamma^{t-s}}{(t-s)!}n^{t-s} +
O_{s,t,m}\!\left(n^{t-s-1}\right).
\]
\end{corollary}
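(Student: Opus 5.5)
The statement to prove is Corollary~\ref{cor:deletion-reconstruction-lift}. The plan is to unpack the definition of a reconstruction code and apply Theorem~\ref{thm:run-upper} uniformly over all pairs of codewords.

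First, fix two distinct codewords $x,y\in\mathcal C$. Since $\mathcal C$ is an $(n,q,m+1;D_s)$-reconstruction code, the definition gives $|D_s(x)\cap D_s(y)|<m+1$, that is, $|D_s(x)\cap D_s(y)|\le m$. By hypothesis we also have $r(x),r(y)\le\gamma n$. Hence every such pair lies in the feasible set of the maximum appearing in Theorem~\ref{thm:run-upper}.

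Next, define
\[
N:=\max_{\substack{x,y\in\Sigma_q^n\\ r(x),\,r(y)\le \gamma n\\ |D_s(x)\cap D_s(y)|\le m}}|D_t(x)\cap D_t(y)|.
\]
This quantity depends only on $n,q,\gamma,s,t,m$, and not on $\mathcal C$. For every pair of distinct codewords we then have $|D_t(x)\cap D_t(y)|\le N<N+1$. By definition, $\mathcal C$ is therefore an $(n,q,N+1;D_t)$-reconstruction code.

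Finally, Theorem~\ref{thm:run-upper} gives, as $n\to\infty$,
\[
N\le \frac{m\gamma^{t-s}}{(t-s)!}n^{t-s}+O_{s,t,m}(n^{t-s-1}).
\]
This is exactly the asserted estimate. The implied constant is uniform because it comes from the theorem, which does not depend on the code.

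There is no real obstacle here. The only points needing care are the off-by-one conversion between the strict inequality in the definition and the non-strict bound $\le m$, and choosing $N$ as the global maximum, so that the error term is independent of the particular code.
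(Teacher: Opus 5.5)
Your proposal is correct and matches the paper's approach: the paper states this corollary as an immediate consequence of the definition of a reconstruction code together with Theorem~\ref{thm:run-upper}. That is exactly your argument, namely converting $|D_s(x)\cap D_s(y)|<m+1$ into $|D_s(x)\cap D_s(y)|\le m$ and taking $N$ to be the code-independent maximum bounded by that theorem.
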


The upper bound in Theorem~\ref{thm:run-upper} is sharp in its leading term whenever the lower-order intersection size can be realized by a fixed finite-length seed pair.

\begin{definition}\label{def:feasible}
For fixed integers $q\ge2$ and $s\ge1$, a positive integer $m$ is called
\emph{$(q,s)$-deletion-feasible} if there exist two sequences $u,v\in\Sigma_q^L$, for some constant $L$, such that
\[
D_{s-1}(u)\cap D_{s-1}(v)=\varnothing \qquad \text{and} \qquad |D_s(u)\cap D_s(v)|=m.
\]
\end{definition}

\begin{remark}
  The assumption $D_{s-1}(x)\cap D_{s-1}(y)=\varnothing$ in Definition \ref{def:feasible} is natural. Without it, the level-$s$ intersection is no longer primitive: it already contains descendants arising from common lower-order deletion descendants. In this regime, the size of $D_s(x)\cap D_s(y)$ may grow with the block length. For instance, it can be of order $\Omega(n)$ when the underlying sequences have linearly many runs, a condition satisfied by almost all sequences. Hence such values of $m$ cannot, in general, be realized by a fixed finite-length seed pair in the sense required by Theorem~\ref{thm:main}. 

By Theorem~\ref{thm:finite-bound}, if $D_{s-1}(x)\cap D_{s-1}(y)=\varnothing$, then $|D_s(x)\cap D_s(y)|\leq \binom{2s}{s}$. This suggests that the finite feasible parameters most relevant to our asymptotic results are those satisfying $m\le \binom{2s}{s}$. Moreover, at the upper endpoint, Alon, Bourla, Graham, He, and Kravitz \cite{Alon-24-IT}, and
independently Pham, Goyal, and Kiah \cite{Pham-25-JCTA}, showed that $\binom{2s}{s}$ is $(q,s)$-deletion-feasible for every $q\ge2$. 
\end{remark}

\begin{theorem}\label{thm:main}
Fix $0<\gamma\le 1$ and integers $q\ge 2$ and $1\le s\le t$. Let $m$ be a
$(q,s)$-deletion-feasible integer, as $n\to\infty$,
\[
\max_{\substack{x,y\in\Sigma_q^n\\
r(x),\,r(y)\le \gamma n\\
|D_s(x)\cap D_s(y)|\le m}}
|D_t(x)\cap D_t(y)|
=
\frac{m\gamma^{t-s}}{(t-s)!}n^{t-s} + O_{s,t,m}\!\left(n^{t-s-1}\right).
\]
\end{theorem}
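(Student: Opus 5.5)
The upper bound is exactly Theorem~\ref{thm:run-upper}, so the task is the matching lower bound. I would build a pair $x,y\in\Sigma_q^n$ by embedding the seed pair $(u,v)$ of Definition~\ref{def:feasible} inside a long common context. The context should meet three requirements. It should contribute about $\gamma n$ runs. It should make $|D_s(x)\cap D_s(y)|=m$ exactly. And it should let every element of $D_s(u)\cap D_s(v)$ be extended by $t-s$ further deletions from the context in essentially $\binom{\gamma n}{t-s}$ distinct ways.

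\textbf{Construction.} Take $x=u\,z$ and $y=v\,z$, where $z$ is a common suffix of length $n-L$. One might also place $u,v$ at the front and pad with a separator. For $z$ I would take roughly $\gamma n$ runs, with run lengths chosen so that the total is $n-L$ and the run count is at most $\gamma n$. A concrete choice is a periodic pattern: mostly runs of length $\lceil 1/\gamma\rceil$ or $\lfloor 1/\gamma\rfloor$, arranged so that $r(z)\le \gamma n - L$. To make it rigid, the first symbol of $z$ should differ from the last symbols of $u$ and $v$, and the run symbols should cycle (e.g.\ $0,1,0,1,\dots$, which is allowed for any $q\ge2$). Then $r(x),r(y)\le\gamma n$.

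\textbf{Lower-order intersection.} I would show $D_s(x)\cap D_s(y)=\{w z: w\in D_s(u)\cap D_s(v)\}$ together with nothing else, so that the intersection has size exactly $m$. Suppose $a\preccurlyeq x$ and $a\preccurlyeq y$ with $s$ deletions each. If some deletion from $x$ falls in $z$, then, roughly, the prefix parts must match with fewer deletions in the $u/v$ region. This is where $D_{s-1}(u)\cap D_{s-1}(v)=\varnothing$ is used: it forces the $u$ and $v$ parts to each lose exactly $s$ symbols with the suffix untouched. Making this rigorous requires controlling how the alignment can shift across the boundary. I expect this boundary-shift argument to be the main obstacle. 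My first attempt would be a counting argument: $|D_{k}(u)\cap D_{k}(v)|$-type quantities combined with the fact that a prefix of length $L-k$ of $a$ must be a common subsequence of $u$ and of $v$-extended-by-$z$'s prefix. If that gets messy, I would insert a long buffer block at the start of $z$ (e.g.\ $s+1$ copies of a symbol absent near the seam, or a run of length $>t$), which makes any cross-boundary alignment costly and pins the seam.

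\textbf{Counting at level $t$.} Given the seam is pinned, for each of the $m$ common $w$ and each $(t-s)$-subset of runs of $z$ hit by deletions, I get the element $w\,z'$. Here $z'\in D_{t-s}(z)$ is obtained by deleting one symbol from each of $t-s$ distinct runs, none of which is a run of length $1$ (so that no runs merge). These elements are pairwise distinct, because distinct run-sets give distinct run-length profiles and the seam is pinned. This yields at least $m\binom{\gamma n - O(1)}{t-s} = \frac{m\gamma^{t-s}}{(t-s)!}n^{t-s}-O(n^{t-s-1})$ common descendants. If $\gamma$ forces some runs of length $1$ (when $\gamma>1/2$), I would restrict to deleting from runs of length $\ge2$. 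Alternatively, I would use the standard fact that deleting distinct symbols from well-separated runs of an alternating-type string gives distinct results; with cycling over $\ge2$ symbols, single-run deletions merge neighbours but still give distinct strings for non-adjacent chosen runs, costing only $O(n^{t-s-1})$. Combined with Theorem~\ref{thm:run-upper}, this gives the asserted equality.
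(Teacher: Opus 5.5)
\textbf{The gap: admissibility of your pair is never proved.} Your overall plan is the paper's. You take the upper bound from Theorem~\ref{thm:run-upper}. For the lower bound you set $x=uz$, $y=vz$ with a seed $(u,v)$ and a common suffix $z$ with about $\gamma n-L$ runs, and count the strings $c\,z'$ with $c\in D_s(u)\cap D_s(v)$ and $z'\in D_{t-s}(z)$. The problem is the step you flag yourself. You need
\[
D_s(uz)\cap D_s(vz)\subseteq\{cz:\ c\in D_s(u)\cap D_s(v)\},
\]
since this is exactly what gives $|D_s(x)\cap D_s(y)|\le m$. Your ``counting argument'' and ``buffer block'' are only gestured at, and you give no reason why a buffer would rule out shifted alignments. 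Without this step the lower bound is unproven: a priori the common suffix could create extra common $s$-deletion descendants and make the pair inadmissible. The extra requirement that the first symbol of $z$ differ from the last symbols of both $u$ and $v$ is unnecessary. It also cannot always be met as stated: for $q=2$ and a seed ending in different symbols, such as $01,10$, no such symbol exists.

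\textbf{The missing idea: peel $z$ from the right, one symbol at a time.} This avoids alignment issues entirely. Let $\sigma$ be a symbol and $a\in D_s(u\sigma)\cap D_s(v\sigma)$.
\begin{enumerate}[label=(\roman*)]
\item If $a$ does not end in $\sigma$, no embedding of $a$ into $u\sigma$ or $v\sigma$ can use the final position. Then $a\in D_{s-1}(u)\cap D_{s-1}(v)=\varnothing$, a contradiction.
\item Hence $a=a'\sigma$. The symbols of $a'$ occupy positions strictly before the one matched to the final $\sigma$, so $a'\in D_s(u)\cap D_s(v)$.
\item The same reasoning gives $\operatorname{LCS}(u\sigma,v\sigma)\le \operatorname{LCS}(u,v)+1$. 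So the hypothesis $D_{s-1}(u\sigma)\cap D_{s-1}(v\sigma)=\varnothing$ is preserved, and you can iterate over all of $z$.
\end{enumerate}
This is Lemma~\ref{lem:suffix-cancellation}. It holds for every suffix $z$, so you need no separator, no buffer and no condition on the first symbol of $z$.

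\textbf{Two simplifications in the counting step.}
\begin{enumerate}[label=(\roman*)]
\item Distinctness of the $c\,z'$ over pairs $(c,z')$ is automatic, because every such $c$ has length $L-s$. No seam-pinning is needed.
\item Deleting one symbol from each of $t-s$ runs of length $\ge 2$ gives the wrong constant when $\gamma>1/2$, since then only about $(1-\gamma)n$ such runs exist. Your fallback is essentially Levenshtein's bound, available as Lemma~\ref{lem:ballsize}:
\[
|D_{t-s}(z)|\ge\binom{r(z)-(t-s)+1}{t-s},\qquad r(z)=\lfloor\gamma n\rfloor-L,
\]
which directly gives $m\gamma^{t-s}n^{t-s}/(t-s)!+O(n^{t-s-1})$.
\end{enumerate}
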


The feasibility assumption in Theorem~\ref{thm:main} is used only for
the lower bound. For example, if $s=1$ and $x\ne y$, then $|D_1(x)\cap D_1(y)|\le2$; hence a statement with leading coefficient
$m>2$ cannot hold without an additional realization assumption.

The proof of Theorems~\ref{thm:run-upper} is based on a layer-by-layer analysis of common subsequences. We first establish an upper bound on the number of maximal common
subsequences. We then introduce a primitive-layer decomposition, which separates $D_t(x)\cap D_t(y)$ according to the first level at which a common subsequence becomes primitive, or equivalently maximal with respect to extension. The layers above level $s$ contribute only lower-order terms, while the prescribed intersection $D_s(x)\cap D_s(y)$ gives the main term. Combining this decomposition with classical run-count bounds for deletion balls yields the upper
bound. The matching lower bound in Theorem~\ref{thm:main} is obtained by appending a long tail with controlled run count to a fixed finite-length seed pair.

We also develop the insertion analogue of this theory.

\begin{theorem}\label{thm:finite-insertion-bound}
Fix integers $q\ge2$, $1\le s\le t$, and $m\geq 1$. As $n\to\infty$, 
\[
\max_{\substack{x,y\in\Sigma_q^n\\
|I_s(x)\cap I_s(y)|\le m}}
|I_t(x)\cap I_t(y)|
\le
\frac{m(q-1)^{t-s}}{(t-s)!}n^{t-s} + O_{q,s,t,m}\!\left(n^{t-s-1}\right).
\]
\end{theorem}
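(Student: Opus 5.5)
We follow the deletion strategy, now applied to supersequences. Write $\mathcal I_t=I_t(x)\cap I_t(y)$. For a common supersequence $w\in\mathcal I_t$, call it \emph{primitive at level $k$} if $k$ is the smallest integer such that $w$ contains a common supersequence $z\in\mathcal I_k$ as a subsequence; equivalently, $z$ is a minimal common supersequence of $x,y$ of length $n+k$. The decomposition is
\[
\mathcal I_t=\bigcup_{k=s}^{t}\ \bigcup_{z\in\mathcal M_k} I_{t-k}(z)\cap \mathcal I_t ,
\]
where $\mathcal M_k$ is the set of minimal common supersequences of length $n+k$. We may assume $\mathcal I_{s-1}$ contributes nothing at level below $s$. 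Indeed, every $w\in\mathcal I_t$ contains some shortest common supersequence of length $n+k_0$. If $k_0<s$, then $I_{s-k_0}$ of that supersequence sits inside $\mathcal I_s$ and has size $I_q(n+k_0,s-k_0)\ge n$. This exceeds $m$ for large $n$, so $k_0\ge s$ once $n$ is large. Hence $\mathcal I_t\subseteq\bigcup_{k\ge s}\bigcup_{z\in\mathcal M_k}I_{t-k}(z)$.

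\textbf{Main term.} Every $z\in\mathcal M_s$ lies in $\mathcal I_s$, so $|\mathcal M_s|\le m$. By Levenshtein's formula,
\[
|I_{t-s}(z)|=I_q(n+s,t-s)=\frac{(q-1)^{t-s}}{(t-s)!}n^{t-s}+O(n^{t-s-1}).
\]
The level-$s$ layer therefore contributes at most $m(q-1)^{t-s}n^{t-s}/(t-s)!+O(n^{t-s-1})$.

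\textbf{Higher layers.} For $k>s$, each $z\in\mathcal M_k$ contributes at most $I_q(n+k,t-k)=O(n^{t-k})\le O(n^{t-s-1})$. It therefore suffices to show that $|\mathcal M_k|$ is bounded by a constant depending only on $q,s,k,m$. This is the insertion counterpart of the bound on the number of maximal common subsequences, and I expect it to be the main obstacle.

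The plan is to use the duality between supersequences and alignments. A minimal common supersequence of length $n+k$ corresponds to a common subsequence of $x,y$ of length $n-k$ that is ``locally maximal'' along an alignment. We then use $|\mathcal I_s|\le m$ to force a rigid structure:
\begin{itemize}[leftmargin=*]
\item Since $\mathcal I_{s-1}=\varnothing$ for large $n$ (as argued above), $x$ and $y$ have edit structure confined to boundedly many windows.
\item If $x$ and $y$ differed in many separated places, one could insert symbols freely into long agreeing stretches. This would produce $\Omega(n)$ elements of $\mathcal I_s$, contradicting $|\mathcal I_s|\le m$.
\end{itemize}

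Concretely, I would show that $x=aub$ and $y=au'b$ with $|u|=|u'|\le L(s,m)$. The argument: the number of positions $i$ at which a minimal common supersequence of length $n+s$ can be built by local insertion is at most linear in the intersection size. Bounding the length $L$ of the window gives the following. Any minimal common supersequence of length $n+k$ must equal $a\,z'\,b$, up to boundedly many shifts through runs at the window boundaries, with $z'$ a minimal common supersequence of $u,u'$. Hence $|\mathcal M_k|\le C(q,L,k)$.

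If establishing the window structure directly proves awkward, the fallback is to use the known injection from \cite{Sun-26-IT} relating deletion and insertion intersections. We would transport the maximal-common-subsequence count already proven for Theorem~\ref{thm:run-upper}, taking $\gamma=1$, to bound $|\mathcal M_k|$. Summing the layers gives the stated bound.
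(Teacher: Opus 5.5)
Your layer decomposition and main-term estimate are correct and match the paper's architecture. Every $w\in I_t(x)\cap I_t(y)$ contains a minimal (not necessarily shortest, as you wrote) common supersequence $z$ of some length $n+k$. There are at most $m$ centres at level $s$, each contributing $I_q(n+s,t-s)$, and a centre at level $k>s$ contributes $O(n^{t-k})$. Your observation that $I_{s-1}(x)\cap I_{s-1}(y)=\varnothing$ for $n>m$ is also valid, though unnecessary: the paper simply indexes the first union by $I_s(x)\cap I_s(y)$.

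The gap is the step you flagged yourself. You never bound your $|\mathcal M_k|$ for $s<k\le t$, and neither proposed route works.

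The window claim ($x=aub$, $y=au'b$ with $|u|=|u'|\le L(s,m)$) is false. For $s=1$, $m=2$, every pair $x\ne y$ is admissible because $|I_1(x)\cap I_1(y)|\le 2$. Yet $x=(01)^k0$ and $y=(10)^k1$ differ in every coordinate, and here $|I_1(x)\cap I_1(y)|=2$. One deletion plus one insertion shifts an arbitrarily long stretch, so a small lower-order intersection does not confine the differences to a bounded window.

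The fallback is not an argument either. The Sun--Ge injection maps $D_t(x)\cap D_t(y)$ into $I_t(x)\cap I_t(y)$, which is the wrong direction for bounding the insertion side. Moreover, maximal common subsequences do not correspond to minimal common supersequences: for $x=00$, $y=11$ there is one maximal common subsequence of length $0$ but six minimal common supersequences of length $4$.

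The missing idea is that $|\mathcal M_k|$ is bounded \emph{unconditionally}, with no use of the hypothesis $|I_s(x)\cap I_s(y)|\le m$. Write $\mathcal S_N(u,v)$ for the minimal common supersequences of length $N$. One shows $|\mathcal S_N(u,v)|\le\binom{a+b}{a}$ whenever $|u|=N-a$ and $|v|=N-b$, by induction on first symbols.

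If $u=\alpha u'$ and $v=\alpha v'$, every minimal common supersequence starts with $\alpha$. Otherwise its first symbol is used by neither embedding and could be deleted. Removing that $\alpha$ injects $\mathcal S_N(u,v)$ into $\mathcal S_{N-1}(u',v')$.

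If $u=\alpha u'$ and $v=\beta v'$ with $\alpha\ne\beta$, every minimal common supersequence starts with $\alpha$ or $\beta$, for the same reason. Removing the first symbol sends those starting with $\alpha$ into $\mathcal S_{N-1}(u',v)$ and those starting with $\beta$ into $\mathcal S_{N-1}(u,v')$, injectively. This gives Pascal's recursion.

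Taking $N=n+k$ and $a=b=k$ yields $|\mathcal M_k|\le\binom{2k}{k}$. With that in hand, the rest of your argument goes through as written.
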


Similarly, Theorem~\ref{thm:finite-insertion-bound} yields a lifting
result for insertion reconstruction codes.

\begin{definition}
An \emph{$(n,q,N;I_t)$-reconstruction code} is a code $\mathcal C\subseteq\Sigma_q^n$ such that $|I_t(x)\cap I_t(y)|<N$ for any two distinct codewords $x,y\in\mathcal C$.
\end{definition}

\begin{corollary}\label{cor:insertion-reconstruction-lift}
Fix integers $q\ge2$, $1\le s\le t$, and $m\ge1$. Any $(n,q,m+1;I_s)$-reconstruction code is also an $(n,q,N+1;I_t)$-reconstruction code, where, as $n\to\infty$,
\[
N
\le
\frac{m(q-1)^{t-s}}{(t-s)!}n^{t-s} +
O_{q,s,t,m}\!\left(n^{t-s-1}\right).
\]
\end{corollary}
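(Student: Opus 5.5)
The plan is to read this off directly from Theorem~\ref{thm:finite-insertion-bound}, exactly as Corollary~\ref{cor:deletion-reconstruction-lift} follows from Theorem~\ref{thm:run-upper}. First I would fix the quantity $N$ itself, independently of the code. Set
\[
N=N(n,q,s,t,m):=\max_{\substack{x,y\in\Sigma_q^n\\ |I_s(x)\cap I_s(y)|\le m}} |I_t(x)\cap I_t(y)|.
\]
This is a maximum over a finite, nonempty set (take $x=y$ if needed), so it is well defined. By Theorem~\ref{thm:finite-insertion-bound},
\[
N\le \frac{m(q-1)^{t-s}}{(t-s)!}n^{t-s}+O_{q,s,t,m}(n^{t-s-1})
\]
as $n\to\infty$. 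The implied constant depends only on $q,s,t,m$, and not on the code.

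Next I would verify the reconstruction property. Let $\mathcal C\subseteq\Sigma_q^n$ be an $(n,q,m+1;I_s)$-reconstruction code, and take distinct codewords $x,y\in\mathcal C$. By definition $|I_s(x)\cap I_s(y)|<m+1$. Since these are integers, this gives $|I_s(x)\cap I_s(y)|\le m$. Hence the pair $(x,y)$ lies in the index set of the maximum defining $N$, and so $|I_t(x)\cap I_t(y)|\le N<N+1$. This holds for every pair of distinct codewords, so $\mathcal C$ is an $(n,q,N+1;I_t)$-reconstruction code. Combined with the previous paragraph, this gives the stated asymptotic bound on $N$.

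There is no genuine obstacle here. The only point requiring care is that $N$ must be chosen uniformly, as the extremal value over all admissible pairs rather than something depending on $\mathcal C$. That uniformity is what makes the $O_{q,s,t,m}(n^{t-s-1})$ term independent of the particular code. All the substantive work lies in Theorem~\ref{thm:finite-insertion-bound}, which is assumed here.
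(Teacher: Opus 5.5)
Your argument is correct and is exactly the paper's intended one: the paper presents this corollary as immediate from Theorem~\ref{thm:finite-insertion-bound} and the definition, since $|I_s(x)\cap I_s(y)|<m+1$ means $|I_s(x)\cap I_s(y)|\le m$, so every pair of distinct codewords is admissible for the extremal quantity $N$. One harmless slip: $x=y$ does not show that the index set is nonempty, because $|I_s(x)|$ grows with $n$ and exceeds $m$; the pair $x=0^n$, $y=1^n$ with $n>s$ works instead, since their $s$-insertion balls are disjoint, and in any case an empty index set would only force $|\mathcal C|\le 1$.
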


The leading term in Theorem~\ref{thm:finite-insertion-bound} is sharp whenever the parameter $m$ can be realized by a fixed finite-length seed pair.

\begin{definition}\label{def:insertion-feasible}
For fixed integers $q\ge2$ and $s\ge1$, a positive integer $m$ is called
\emph{$(q,s)$-insertion-feasible} if there exist two sequences $u,v\in\Sigma_q^L$, for some constant $L$, such that
\[
I_{s-1}(u)\cap I_{s-1}(v)=\varnothing \qquad \text{and} \qquad |I_s(u)\cap I_s(v)|=m.
\]
\end{definition}

As in the deletion case, every $(q,s)$-insertion-feasible integer $m$ satisfies $m\leq \binom{2s}{s}$ and the assumption
$I_{s-1}(x)\cap I_{s-1}(y)=\varnothing$ is natural. Moreover, at the upper endpoint, Sala, Gabrys, Schoeny, and Dolecek \cite{Sala-17-IT} showed that $\binom{2s}{s}$ is $(q,s)$-insertion-feasible for every $q\ge2$.

\begin{theorem}\label{thm:insertion-main}
Fix integers $q\ge2$ and $1\le s\le t$. Let $m$ be a $(q,s)$-insertion-feasible integer, as $n\to\infty$,
\[
\max_{\substack{x,y\in\Sigma_q^n\\
|I_s(x)\cap I_s(y)|\le m}}
|I_t(x)\cap I_t(y)|
=
\frac{m(q-1)^{t-s}}{(t-s)!}n^{t-s}
+
O_{q,s,t,m}\!\left(n^{t-s-1}\right).
\]
\end{theorem}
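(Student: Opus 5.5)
The statement has two parts: an upper bound, which is Theorem~\ref{thm:finite-insertion-bound}, and a matching lower bound for $(q,s)$-insertion-feasible $m$. I will take the upper bound as given, since it is stated earlier in the paper. So the work is the lower bound: for each large $n$, I will build $x,y\in\Sigma_q^n$ with $|I_s(x)\cap I_s(y)|\le m$ and
\[
|I_t(x)\cap I_t(y)|\ge \frac{m(q-1)^{t-s}}{(t-s)!}n^{t-s}-O(n^{t-s-1}).
\]
As in the deletion case, the construction takes a feasible seed pair $u,v\in\Sigma_q^L$ and appends a common tail: $x=uw$ and $y=vw$, with $w\in\Sigma_q^{n-L}$ fixed. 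A natural choice is an alternating tail such as $w=(ab)^{\ast}$ with $a\ne b$, chosen so that $w$ does not interact with $u,v$ at the boundary. Insertion balls are regular, so unlike the deletion case the run count of $w$ does not matter.

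\textbf{Step 1: lower-order control.} I need $|I_s(uw)\cap I_s(vw)|\le m$. The insertion analogue of the primitive-layer idea should show that common supersequences of $uw,vw$ with $s$ insertions are forced, when $I_{s-1}(u)\cap I_{s-1}(v)=\varnothing$, to spend all $s$ insertions on the $u,v$ part. Heuristically, any common supersequence $z$ with $|z|=n+s$ must embed both; if the embeddings of $w$ in $z$ are "aligned" then $z$ splits as $z'w$ with $z'\in I_s(u)\cap I_s(v)$. The delicate case is when the embeddings are misaligned. The clean route is to use the equivalence $I_{s}(x)\cap I_s(y)\neq\varnothing \iff D_s(x)\cap D_s(y)\neq\varnothing$ together with the bijective count of common supersequences, which depends only on the LCS structure. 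I would instead argue directly via the shortest-common-supersequence length: $\mathrm{SCS}(uw,vw)=\mathrm{SCS}(u,v)+|w|$, because of the common suffix. Common supersequences of length exactly $n+s$, which is the SCS length, are then counted by the minimal-supersequence structure. I expect every such minimal $z$ to end with $w$ after stripping, by a greedy-from-the-right argument. This reduction is the main obstacle. If equality fails, I will simply place the seed at the end instead, since the argument is symmetric, or choose a tail letter pattern that guarantees uniqueness.

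\textbf{Step 2: main-term count.} For each of the $m$ sequences $z_i\in I_s(u)\cap I_s(v)$, every sequence in $I_{t-s}(z_iw)$ lies in $I_t(x)\cap I_t(y)$. It therefore suffices to lower bound
\[
\Bigl|\bigcup_i I_{t-s}(z_iw)\Bigr|\ \ge\ \sum_i|I_{t-s}(z_iw)|-\sum_{i<j}|I_{t-s}(z_iw)\cap I_{t-s}(z_jw)|.
\]
Each term $|I_{t-s}(z_iw)|=I_q(n+s,t-s)=\frac{(q-1)^{t-s}}{(t-s)!}n^{t-s}+O(n^{t-s-1})$ by Levenshtein's formula. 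For the pairwise terms, $z_iw\ne z_jw$ are distinct sequences of equal length, so Levenshtein's unconstrained bound gives intersection $O(n^{t-s-1})$. Summing over the $m$ sequences yields the claimed leading term $\frac{m(q-1)^{t-s}}{(t-s)!}n^{t-s}$, with error depending on $q,s,t,m$. Combined with Theorem~\ref{thm:finite-insertion-bound}, this gives the asymptotic equality.
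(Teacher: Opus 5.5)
Your construction matches the paper's: a feasible seed $u,v\in\Sigma_q^L$, $x=uw$, $y=vw$ with an arbitrary tail $w$ (no alternating pattern is needed), and the upper bound taken from Theorem~\ref{thm:finite-insertion-bound}.

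The gap is Step~1. The inclusion $I_s(uw)\cap I_s(vw)\subseteq\{cw: c\in I_s(u)\cap I_s(v)\}$ is the only nontrivial ingredient of the lower bound; it is the paper's Lemma~\ref{lem:insertion-suffix-cancellation}. You only say you ``expect'' it, and your fallbacks cannot help. Moving the seed to the end is the mirror image of the same construction applied to the reversed seed, which is again feasible, so it succeeds or fails with the original. No special tail is needed either, because the statement holds for every $w$.

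Your own shortest-common-supersequence observation closes the gap in a few lines.
\begin{itemize}
\item Since $I_{s-1}(u)\cap I_{s-1}(v)=\varnothing$ and $m\ge1$, the shortest common supersequence of $u,v$ has length $L+s$.
\item Since $\operatorname{LCS}(u'a,v'a)=\operatorname{LCS}(u',v')+1$, and the shortest common supersequence length of $X,Y$ is $|X|+|Y|-\operatorname{LCS}(X,Y)$, the shortest common supersequence of $uw,vw$ has length $n+s$.
\item Write $w=w^-a$ with $a\in\Sigma_q$, and take $z\in I_s(uw)\cap I_s(vw)$. If $z$ does not end in $a$, its last symbol is used by neither embedding. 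Deleting it gives a shorter common supersequence, a contradiction.
\item Hence $z=z'a$. Re-map the terminal $a$ of $uw$ and of $vw$ to this last position. This shows $uw^-\preccurlyeq z'$ and $vw^-\preccurlyeq z'$, where $|z'|=n+s-1$ is again the shortest common supersequence length for the shortened pair.
\item Induction on $|w|$ gives $z=cw$ with $c\in I_s(u)\cap I_s(v)$, so $|I_s(x)\cap I_s(y)|\le m$.
\end{itemize}
The paper argues essentially the same way, phrasing the contradiction as producing an element of $I_{s-1}(u)\cap I_{s-1}(v)$.

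Step~2 is correct but heavier than needed. You bound $\bigl|\bigcup_i I_{t-s}(z_iw)\bigr|$ by Bonferroni. This costs $\binom m2$ pairwise overlaps, each of size $O(n^{t-s-1})$ by Levenshtein's unconstrained theorem; for $t=s$ the overlaps are empty.

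The paper instead uses only the sequences $c_iz$ with $z\in I_{t-s}(w)$. The $c_i$ are distinct and all have length $L+s$, so these $m$ families are pairwise disjoint. This gives $|I_t(x)\cap I_t(y)|\ge m\,|I_{t-s}(w)|$ outright, with $|I_{t-s}(w)|=\sum_{j=0}^{t-s}\binom{n-L+t-s}{j}(q-1)^j$, and needs no external extremal result. Both routes yield the leading term $\frac{m(q-1)^{t-s}}{(t-s)!}n^{t-s}$.
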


As in the deletion case, the feasibility assumption in Theorem~\ref{thm:insertion-main} is needed only for the lower bound.


\subsection{Organization}

The remainder of the paper is organized as follows.
Section~\ref{sec:finite} proves Theorem~\ref{thm:finite-bound},
establishing the finite upper bound under lower-order
deletion-correction constraints. Section~\ref{sec:layers} introduces
maximal common subsequences and the primitive-layer decomposition, and
uses them to prove Theorem~\ref{thm:run-upper}. Section~\ref{sec:lower}
establishes the matching lower bound and proves Theorem~\ref{thm:main}.
Section~\ref{sec:insertion} develops the insertion analogue via minimal
common supersequences and proves
Theorems~\ref{thm:finite-insertion-bound} and~\ref{thm:insertion-main}.
Finally, Section~\ref{sec:conclusion} concludes the paper and highlights
several directions for future research.

\section{Proof of Theorem \ref{thm:finite-bound}}\label{sec:finite}

We begin by defining common subsequences of fixed length.

\begin{definition}
For two sequences $u,v$ and an integer $r\ge 0$, let
\[
\mathcal C_r(u,v)
:=
\{w:\ |w|=r,\ w\preccurlyeq u,\ w\preccurlyeq v\}
\]
denote the set of all common subsequences of $u$ and $v$ of length $r$.
The length of a longest common subsequence of $u$ and $v$ is denoted by
$\operatorname{LCS}(u,v)$; equivalently,
\[
\operatorname{LCS}(u,v)
=
\max\{r:\mathcal C_r(u,v)\ne\varnothing\}.
\]
\end{definition}

\begin{remark}
For two sequences $x,y\in\Sigma_q^n$ and an integer $1\le s\le n$,
\[
D_{s-1}(x)\cap D_{s-1}(y)=\varnothing
\qquad\Longleftrightarrow\qquad
\operatorname{LCS}(x,y)\le n-s.
\]
\end{remark}

\begin{lemma}\label{lem:barrier-shadow}
Let $a,b,N$ be nonnegative integers, and let $u,v$ be sequences satisfying
\[
|u|=N+a,
\qquad
|v|=N+b,
\qquad
\operatorname{LCS}(u,v)\le N.
\]
Then, for every $0\le r\le N$,
\[
|\mathcal C_r(u,v)|\le \binom{a+b}{a}\binom Nr .
\]
\end{lemma}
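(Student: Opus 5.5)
We prove the lemma by induction on $|u|+|v|$, peeling off the first symbols of $u$ and $v$ and matching the resulting recursion to Pascal's rule in both binomial factors. It is convenient to prove the statement for \emph{all} $r\ge 0$, with $\binom Nr=0$ for $r>N$. This is harmless: if $r>N\ge\operatorname{LCS}(u,v)$ then $\mathcal C_r(u,v)=\varnothing$. The extension avoids boundary bookkeeping inside the induction.

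\emph{Base cases.} If $r=0$, then $\mathcal C_0(u,v)$ contains only the empty word, and $1\le\binom{a+b}{a}\binom N0$. If $a=0$, every element of $\mathcal C_r(u,v)$ is a length-$r$ subsequence of $u$, where $|u|=N$. Such a subsequence is determined by a choice of $r$ positions, so $|\mathcal C_r(u,v)|\le\binom Nr=\binom{b}{0}\binom Nr$. The case $b=0$ is symmetric. So from now on assume $a,b\ge1$ and $r\ge1$. In particular $u$ and $v$ are nonempty; write $u=\alpha u'$ and $v=\beta v'$.

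\emph{Case $\alpha=\beta$.} Take $w\in\mathcal C_r(u,v)$. If $w$ begins with $\alpha$, say $w=\alpha w'$, then the greedy leftmost embedding gives $w'\preccurlyeq u'$ and $w'\preccurlyeq v'$. If $w$ does not begin with $\alpha$, then no embedding of $w$ uses the first symbol of either word, so $w\preccurlyeq u'$ and $w\preccurlyeq v'$. Hence
\[
\mathcal C_r(u,v)\subseteq \alpha\,\mathcal C_{r-1}(u',v')\cup\mathcal C_r(u',v').
\]
Since $\operatorname{LCS}(u,v)\ge1$, we have $N\ge1$. Also $\operatorname{LCS}(u,v)=1+\operatorname{LCS}(u',v')$, so the pair $(u',v')$ satisfies the hypotheses with parameters $(N-1,a,b)$. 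The induction hypothesis then gives the bound
\[
\binom{a+b}{a}\left[\binom{N-1}{r-1}+\binom{N-1}{r}\right]=\binom{a+b}{a}\binom Nr .
\]
The term with $r>N-1$ is consistent with the zero-binomial convention.

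\emph{Case $\alpha\ne\beta$.} Any $w\in\mathcal C_r(u,v)$ fails to begin with at least one of $\alpha,\beta$. If $w$ does not begin with $\alpha$, then $w\preccurlyeq u'$; otherwise $w$ does not begin with $\beta$, and then $w\preccurlyeq v'$. Thus
\[
\mathcal C_r(u,v)\subseteq\mathcal C_r(u',v)\cup\mathcal C_r(u,v').
\]
Deleting a symbol cannot increase the LCS, so $(u',v)$ satisfies the hypotheses with parameters $(N,a-1,b)$, and $(u,v')$ with $(N,a,b-1)$. Both are valid since $a,b\ge1$. Induction gives
\[
\left[\binom{a+b-1}{a-1}+\binom{a+b-1}{a}\right]\binom Nr=\binom{a+b}{a}\binom Nr .
\]

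The only delicate point is the choice of recursion. Each of the two cases must reduce exactly one of the three parameters $N,a,b$ while preserving the LCS hypothesis, so that the two covering terms combine through Pascal's rule. The matched case uses the first-symbol split, where the LCS drops together with $N$. The mismatched case uses the "not starting with $\alpha$ / not starting with $\beta$" dichotomy, where the LCS bound is kept and the excess $a$ or $b$ drops. Once this is set up, the rest is routine.
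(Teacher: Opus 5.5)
Your proof is correct and follows essentially the same route as the paper: induction on $|u|+|v|$ with a split on the first symbols. In the matched case $N$ drops because $\operatorname{LCS}(u',v')\le N-1$; in the mismatched case you use the ``not starting with $\alpha$ / not starting with $\beta$'' cover, and Pascal's rule closes each case. Your extension to all $r\ge 0$, with $\binom{N}{r}=0$ for $r>N$, is a small but welcome refinement: it makes explicit the boundary fact $\mathcal C_N(u',v')=\varnothing$ when $r=N$, which the paper's matched case uses only implicitly.
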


\begin{proof}
We argue by induction on $|u|+|v|$. The cases $r=0$, $a=0$, or $b=0$
are immediate. Assume $r,a,b\ge 1$, and write
\[
u=\alpha u',\qquad v=\beta v',
\]
where $\alpha$ and $\beta$ are the first symbols of $u$ and $v$, respectively.

First suppose that $\alpha=\beta$. Then
$\operatorname{LCS}(u',v')\le N-1$; otherwise the common first symbol $\alpha$
would extend a common subsequence of $u'$ and $v'$ of length $N$ to one
of $u$ and $v$ of length $N+1$. Partition $\mathcal C_r(u,v)$ according
to whether its elements start with $\alpha$. If $w=\alpha w'$, then
$w'\in\mathcal C_{r-1}(u',v')$; if $w$ does not start with $\alpha$, then
$w\in\mathcal C_r(u',v')$. Hence
\[
|\mathcal C_r(u,v)|
\le
|\mathcal C_{r-1}(u',v')|+|\mathcal C_r(u',v')|.
\]
Applying the induction hypothesis gives
\[
|\mathcal C_r(u,v)|
\le
\binom{a+b}{a}\binom{N-1}{r-1}
+
\binom{a+b}{a}\binom{N-1}{r}
=
\binom{a+b}{a}\binom Nr .
\]

It remains to consider the case $\alpha\ne\beta$. Partition
$\mathcal C_r(u,v)$ according to whether its elements start with $\alpha$.
If $w$ does not start with $\alpha$, then $w\in\mathcal C_r(u',v)$. If
$w$ starts with $\alpha$, then it cannot start with $\beta$, and hence
$w\in\mathcal C_r(u,v')$. 
This implies that
\[
|\mathcal C_r(u,v)|
\le
|\mathcal C_{r}(u',v)|+|\mathcal C_r(u,v')|.
\]
Moreover,
\[
\operatorname{LCS}(u',v)\le N,
\qquad
\operatorname{LCS}(u,v')\le N.
\]
Therefore, by the induction hypothesis,
\[
|\mathcal C_r(u,v)|
\le
\binom{a+b-1}{a-1}\binom Nr
+
\binom{a+b-1}{a}\binom Nr
=
\binom{a+b}{a}\binom Nr .
\]
This completes the proof.
\end{proof}

\begin{remark}
The condition $\operatorname{LCS}(u,v)\le N$ in
Lemma~\ref{lem:barrier-shadow} is essential. If $u=v$ has length $N+1$
and all symbols are distinct, then $|\mathcal C_N(u,v)|=N+1$, whereas
$\binom21\binom NN=2$.
\end{remark}

\begin{proof}[Proof of Theorem~\ref{thm:finite-bound}]
The assumption is equivalent to $\operatorname{LCS}(x,y)\le n-s$. Applying
Lemma~\ref{lem:barrier-shadow} with
\[
u=x,
\qquad
v=y,
\qquad
N=n-s,
\qquad
a=b=s,
\qquad
r=n-t,
\]
we obtain
\[
|D_t(x)\cap D_t(y)|
=|\mathcal C_{n-t}(x,y)|
\le
\binom{2s}{s}\binom{n-s}{n-t}
=
\binom{2s}{s}\binom{n-s}{t-s}.
\]
If $d=t-s\ge 1$, then
\[
\binom{n-s}{d}
=
\frac{n^d}{d!}
-
\frac{2s+(d-1)}{2(d-1)!}n^{d-1}
+O_{s,d}(n^{d-2}).
\]
This gives the asserted estimate with a negative second-order term.
\end{proof}

\section{Proof of Theorem \ref{thm:run-upper}}\label{sec:layers}

We begin by defining maximal common subsequences of fixed length.

\begin{definition}\label{def:maximal}
Let $u$ and $v$ be sequences and let $N\ge0$. A sequence $w$ is a
\emph{maximal common subsequence} of $u$ and $v$ of length $N$ if
$w\preccurlyeq u$, $w\preccurlyeq v$, and there is no common subsequence
$w^+$ of $u$ and $v$ of length $N+1$ such that $w\preccurlyeq w^+$.
Let $\mathcal M_N(u,v)$ be the set of all such distinct sequences.
\end{definition}

\begin{remark}
    A maximal common subsequence may not be a longest common subsequence.
\end{remark}

\begin{lemma}\label{lem:maximal-common-subsequences}
Let $a,b,N$ be nonnegative integers, and let $u,v$ be sequences satisfying
\[
|u|=N+a,
\qquad
|v|=N+b.
\]
Then
\[
|\mathcal M_N(u,v)|\le \binom{a+b}{a}.
\]
\end{lemma}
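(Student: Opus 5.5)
I will prove Lemma~\ref{lem:maximal-common-subsequences} by the same first-symbol induction on $|u|+|v|$ used in Lemma~\ref{lem:barrier-shadow}. The base cases are routine. If $N=0$, the only candidate is the empty word, so $|\mathcal M_0(u,v)|\le 1$. If $a=0$, then $|u|=N$, so any $w\in\mathcal M_N(u,v)$ must equal $u$, and again $|\mathcal M_N|\le 1=\binom{b}{0}$. The case $b=0$ is symmetric. So assume $N,a,b\ge1$ and write $u=\alpha u'$, $v=\beta v'$. Every $w\in\mathcal M_N(u,v)$ is nonempty; write $w=\gamma w'$.

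\emph{Case $\alpha=\beta$.} I claim every maximal $w$ starts with $\alpha$. If it did not, then $w$ embeds into $u'$ and into $v'$ (its first symbol differs from $\alpha$, so greedy embeddings skip the first letter). Then $\alpha w$ is a common subsequence of $u,v$ of length $N+1$ containing $w$, which contradicts maximality. So $w=\alpha w'$, and I take leftmost embeddings, so that $w'\preccurlyeq u'$ and $w'\preccurlyeq v'$. Next I check $w'\in\mathcal M_{N-1}(u',v')$. Suppose $w'\preccurlyeq w'^+$ with $w'^+$ a common subsequence of $u',v'$ of length $N$. Then $\alpha w'^+$ is a common subsequence of $u,v$ of length $N+1$ containing $w$, a contradiction. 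Since the map $w\mapsto w'$ is injective, induction with parameters $(N-1,a,b)$ gives the bound $\binom{a+b}{a}$.

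\emph{Case $\alpha\neq\beta$.} I split $\mathcal M_N(u,v)$ according to whether $w$ starts with $\alpha$.
\begin{itemize}
\item If $w$ does not start with $\alpha$, then $w\preccurlyeq u'$, and I aim to show $w\in\mathcal M_N(u',v)$. Any common extension $w^+$ of $u',v$ is also a common subsequence of $u,v$, which contradicts maximality. So $w\in\mathcal M_N(u',v)$, and induction with $(N,a-1,b)$ bounds this class by $\binom{a+b-1}{a-1}$.
\item If $w$ starts with $\alpha\ne\beta$, then $w\preccurlyeq v'$, and the same argument gives $w\in\mathcal M_N(u,v')$. Induction with $(N,a,b-1)$ bounds this class by $\binom{a+b-1}{a}$.
\end{itemize}
Pascal's rule then sums the two bounds to $\binom{a+b}{a}$.

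The main thing to be careful about is that maximality transfers correctly. It is inherited when passing to the smaller pair because extensions in the smaller pair are extensions in the larger pair. This is the direction we need for an injection into $\mathcal M$ of the smaller pair. Unlike Lemma~\ref{lem:barrier-shadow}, no LCS hypothesis is needed, because maximality itself supplies the barrier in the case $\alpha=\beta$. Finally, note that the parameter $N$ stays nonnegative throughout, and the lengths match as $|u'|=(N-1)+a$ in the first case and $(N)+(a-1)$ in the second.
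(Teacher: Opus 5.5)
Your proof is correct and follows essentially the same route as the paper's: induction on $|u|+|v|$ via first symbols. When $\alpha=\beta$, every maximal $w$ must begin with $\alpha$, which gives an injection into $\mathcal M_{N-1}(u',v')$; when $\alpha\ne\beta$, you split into $\mathcal M_N(u',v)$ and $\mathcal M_N(u,v')$ and combine with Pascal's rule. Your added details (nonemptiness of $w$ since $N\ge1$, leftmost embeddings giving $w'\preccurlyeq u'$ and $w'\preccurlyeq v'$, and the inheritance of maximality by the smaller pair) only make explicit steps the paper leaves implicit.
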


\begin{proof}
We argue by induction on $|u|+|v|$. The cases $N=0$, $a=0$, or $b=0$
are immediate. Assume $N,a,b\ge 1$, and write
\[
u=\alpha u',\qquad v=\beta v',
\]
where $\alpha$ and $\beta$ are the first symbols of $u$ and $v$, respectively.

First suppose that $\alpha=\beta$. If $w\in\mathcal M_N(u,v)$ does not
start with $\alpha$, then $w\preccurlyeq u'$ and $w\preccurlyeq v'$, so
$\alpha w$ would be a common subsequence of $u$ and $v$ of length $N+1$
containing $w$, a contradiction. Thus every $w\in\mathcal M_N(u,v)$ has
the form $w=\alpha w'$. Moreover, $w'$ must be a maximal common
subsequence of $u'$ and $v'$ of length $N-1$; otherwise a longer common
subsequence of $u'$ and $v'$ containing $w'$ would yield a common
subsequence of $u$ and $v$ of length $N+1$ containing $w$. 
By the induction hypothesis, we can derive
\[
|\mathcal M_N(u,v)|
\le
|\mathcal M_{N-1}(u',v')|
\le
\binom{a+b}{a}.
\]

It remains to consider the case $\alpha\ne\beta$. Partition
$\mathcal M_N(u,v)$ according to whether its elements start with
$\alpha$. If $w$ does not start with $\alpha$, then
$w\in\mathcal M_N(u',v)$. If $w$ starts with $\alpha$, then it cannot
start with $\beta$, and hence $w\in\mathcal M_N(u,v')$. By the induction hypothesis, we can derive
\[
|\mathcal M_N(u,v)|\le |\mathcal M_N(u',v)|+|\mathcal M_N(u,v')|
\le
\binom{a+b-1}{a-1}+\binom{a+b-1}{a}
=
\binom{a+b}{a}.
\]
This completes the proof.
\end{proof}

We now introduce the primitive-layer decomposition of common subsequences.

\begin{definition}\label{def:primitive-layer}
For $x,y\in\Sigma_q^n$ and $0\le j\le n$, define
\[
\mathcal L_j(x,y)
:=
D_j(x)\cap D_j(y)
\setminus
\bigcup_{z\in D_{j-1}(x)\cap D_{j-1}(y)}D_1(z),
\]
where $D_{-1}(x)\cap D_{-1}(y)=\varnothing$ by convention.
\end{definition}

\begin{lemma}[Primitive-layer decomposition]\label{lem:primitive-layer}
For any $x,y\in\Sigma_q^n$ and $0\le j\le n$,
\[
\mathcal L_j(x,y)=\mathcal M_{n-j}(x,y).
\]
Consequently,
\[
|\mathcal L_j(x,y)|\le \binom{2j}{j}.
\]
Moreover, if $1\le s\le t$, then
\[
D_t(x)\cap D_t(y)
=
\left( \bigcup_{z\in D_s(x)\cap D_s(y)}D_{t-s}(z) \right)
\cup \left(
\bigcup_{j=s+1}^{t}\ \bigcup_{z\in\mathcal L_j(x,y)}D_{t-j}(z) \right).
\]
\end{lemma}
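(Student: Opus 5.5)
The proof splits into three parts: the identification $\mathcal L_j=\mathcal M_{n-j}$, the size bound, and the decomposition.

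\textbf{Step 1: $\mathcal L_j=\mathcal M_{n-j}$.} I would check both inclusions straight from the definitions. A word $w\in D_j(x)\cap D_j(y)$ is a common subsequence of length $n-j$. It lies in $D_1(z)$ for some $z\in D_{j-1}(x)\cap D_{j-1}(y)$ exactly when $w\preccurlyeq z$ for some common subsequence $z$ of length $n-j+1$. So $w\in\mathcal L_j(x,y)$ if and only if no common subsequence of length $n-j+1$ contains $w$, which is precisely the condition $w\in\mathcal M_{n-j}(x,y)$. The case $j=0$ is consistent with the convention $D_{-1}(x)\cap D_{-1}(y)=\varnothing$: here $\mathcal L_0=\{x\}\cap\{y\}$, and this equals $\mathcal M_n(x,y)$, since there is no common subsequence of length $n+1$.

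\textbf{Step 2: the bound.} Apply Lemma~\ref{lem:maximal-common-subsequences} with $u=x$, $v=y$, $N=n-j$ and $a=b=j$. This gives $|\mathcal L_j(x,y)|\le\binom{2j}{j}$ immediately.

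\textbf{Step 3: the decomposition.} I expect this to be the only step needing real care. The inclusion $\supseteq$ is clear, because every $z\in D_j(x)\cap D_j(y)$ with $j\le t$ satisfies $D_{t-j}(z)\subseteq D_t(x)\cap D_t(y)$.

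For $\subseteq$, fix $w\in D_t(x)\cap D_t(y)$ and climb upward.
\begin{enumerate}[label=(\roman*)]
\item Set $j=t$ and $z=w$.
\item While $j>s$ and $z\notin\mathcal L_j(x,y)$, the definition of $\mathcal L_j$ supplies $z'\in D_{j-1}(x)\cap D_{j-1}(y)$ with $z\in D_1(z')$. Replace $z$ by $z'$ and $j$ by $j-1$.
\end{enumerate}
The invariant is $w\in D_{t-j}(z)$, since deletions compose: $z\in D_1(z')$ and $w\in D_{t-j}(z)$ give $w\in D_{t-j+1}(z')$.

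The process stops in one of two ways:
\begin{itemize}
\item at some $j\in\{s+1,\dots,t\}$ with $z\in\mathcal L_j(x,y)$, which places $w$ in the second union;
\item at $j=s$ with $z\in D_s(x)\cap D_s(y)$, which places $w$ in the first union.
\end{itemize}
Termination is automatic because $j$ strictly decreases. This is a straightforward finite descent, so I do not anticipate a genuine obstacle.
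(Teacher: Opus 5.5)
Your proof is correct and follows essentially the same route as the paper. You identify $\mathcal L_j=\mathcal M_{n-j}$ directly from the definitions, get the bound from Lemma~\ref{lem:maximal-common-subsequences} with $N=n-j$ and $a=b=j$, and prove the decomposition by climbing from $w$ to a primitive common supersequence; the only cosmetic difference is that the paper chooses the smallest level $j>s$ containing a supersequence of $w$ (after first handling the case where $w$ lies below $D_s(x)\cap D_s(y)$), whereas you run the equivalent step-by-step descent.
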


\begin{proof}
A sequence $w\in D_j(x)\cap D_j(y)$ is not in $\mathcal L_j(x,y)$ if and
only if $w\in D_1(z)$ for some
$z\in D_{j-1}(x)\cap D_{j-1}(y)$. Equivalently, $w$ is contained in a
common subsequence of $x$ and $y$ of length $n-j+1$. Thus
$\mathcal L_j(x,y)$ is exactly the set of maximal common subsequences of
$x$ and $y$ of length $n-j$, namely
\[
\mathcal L_j(x,y)=\mathcal M_{n-j}(x,y).
\]
Applying Lemma~\ref{lem:maximal-common-subsequences} with
$N=n-j$ and $a=b=j$ gives
\[
|\mathcal L_j(x,y)|
=
|\mathcal M_{n-j}(x,y)|
\le
\binom{2j}{j}.
\]

It remains to prove the decomposition. 
The inclusion ``$\supseteq$'' is immediate.
For the reverse inclusion, take $w\in D_t(x)\cap D_t(y)$. If $w$ is contained in some sequence of $D_s(x)\cap D_s(y)$, then
$w$ belongs to the first union. Otherwise, choose the smallest $j>s$ for
which there exists $z\in D_j(x)\cap D_j(y)$ with $w\preccurlyeq z$. Such a $j$ exists
by taking $j=t$ and $z=w$. 
By the minimality of $j$, the chosen $z$ cannot be obtained by deleting one
symbol from a sequence in $D_{j-1}(x)\cap D_{j-1}(y)$, which means that $z\in\mathcal L_j(x,y)$. Since
$|z|=n-j$ and $|w|=n-t$, we have $w\in D_{t-j}(z)$. Hence $w$ belongs to the second union. This proves the reverse inclusion and completes the proof.
\end{proof}

The proof of Theorem~\ref{thm:run-upper} relies on the following classical
bound on the size of a deletion ball in terms of the run count of its
center.

\begin{lemma}[Levenshtein~\cite{Levenshtein-66}]
\label{lem:ballsize}
Let $z\in\Sigma_q^n$ be a sequence with $r(z)\le R$. Then
\[
\binom{R-k+1}{k}\leq |D_k(z)|
\le
\binom{R+k-1}{k}.
\]
\end{lemma}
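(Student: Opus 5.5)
For the upper bound I will count deletion patterns run by run. For the lower bound I will exhibit $\binom{r-k+1}{k}$ deletion patterns that give pairwise distinct outputs. Throughout, let $r=r(z)$. Write $z$ as its run decomposition $z=a_1^{\ell_1}a_2^{\ell_2}\cdots a_r^{\ell_r}$ with $a_i\ne a_{i+1}$ and $\ell_i\ge1$.

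\emph{Upper bound.} Deleting any one symbol from a run gives the same word, regardless of which position in the run is removed. Hence every $w\in D_k(z)$ is determined by a vector $(d_1,\dots,d_r)$ of nonnegative integers with $d_1+\cdots+d_r=k$ and $d_i\le \ell_i$, namely by deleting $d_i$ symbols from run $i$. The map from such vectors onto $D_k(z)$ is surjective. The number of nonnegative vectors with sum $k$, ignoring the caps $d_i\le\ell_i$, is $\binom{r+k-1}{k}$. Since $\binom{R+k-1}{k}$ is nondecreasing in $R$, this gives $|D_k(z)|\le\binom{R+k-1}{k}$ whenever $r(z)\le R$.

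\emph{Lower bound.} This is the step needing care. I will read it with $R=r(z)$: under the weaker hypothesis $r(z)\le R$ it fails, for instance for a constant sequence. Consider the $k$-subsets $S\subseteq\{1,\dots,r\}$ containing no two consecutive indices; there are exactly $\binom{r-k+1}{k}$ of them. For each such $S$, let $w_S$ be obtained by deleting one symbol from every run indexed by $S$. I will show that $S\mapsto w_S$ is injective.

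Take $S\ne S'$ and let $i$ be the smallest index in their symmetric difference, say $i\in S\setminus S'$. The deletions in runs $1,\dots,i-1$ coincide, so $w_S$ and $w_{S'}$ share the same prefix $P$ coming from those runs. After $P$, the word $w_{S'}$ continues with $a_i^{\ell_i}$. The word $w_S$ continues with $a_i^{\ell_i-1}$ followed by the full run $a_{i+1}^{\ell_{i+1}}$, which is untouched because $i+1\notin S$ by non-adjacency. If $i=r$, then $w_S$ simply ends there instead.

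At position $|P|+\ell_i$ the two words therefore differ. In $w_{S'}$ that position holds $a_i$. In $w_S$ it holds $a_{i+1}\ne a_i$, or, when $i=r$, $w_S$ ends one symbol earlier. Either way $w_S\ne w_{S'}$, so $|D_k(z)|\ge\binom{r-k+1}{k}$.

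The main obstacle is this distinctness argument. Non-adjacency is used exactly so that a vanishing run of length $1$ cannot let runs $i-1$ and $i+1$ merge and disguise the difference: the symbol $a_{i+1}$ survives intact and serves as the witness.
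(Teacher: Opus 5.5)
Your proof is correct. The paper gives no proof of this lemma; it simply cites Levenshtein. Your argument is the standard proof of that classical estimate: stars and bars over the run decomposition for the upper bound, and an injection from the $\binom{r-k+1}{k}$ pairwise non-adjacent $k$-subsets of runs for the lower bound.

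You are also right that the lower bound is false under the literal hypothesis $r(z)\le R$. For example, $z=0^n$ with $n\ge2$, $k=1$, $R=n$ gives $|D_1(z)|=1<n$. The lower bound must be read with $R=r(z)$. This is exactly how the paper uses the lemma:
\begin{itemize}
\item the proof of Theorem~\ref{thm:run-upper} needs only the upper bound, under $r(z)\le\gamma n$;
\item Proposition~\ref{prop:lower} applies the lower bound to a tail $w$ with $r(w)=\lfloor\gamma n\rfloor-L$ exactly.
\end{itemize}

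Two minor points.
\begin{itemize}
\item The branch $i=r$ in your distinctness argument is vacuous. If $r$ were the least element of $S\triangle S'$, then $|S|$ and $|S'|$ would differ by one.
\item Your closing remark slightly misdescribes what non-adjacency buys. It does not protect against runs $i-1$ and $i+1$ merging, since everything before position $|P|$ is common to both words. What it guarantees is that run $i+1$ does not vanish. Otherwise $a_{i+2}=a_i$ could sit at the witness position. For instance, $z=010$ with $S=\{1,2\}$ and $S'=\{2,3\}$ gives $w_S=w_{S'}=0$.
\end{itemize}
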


\begin{proof}[Proof of Theorem \ref{thm:run-upper}]
Fix $x,y$ satisfying the constraints in Theorem \ref{thm:run-upper}, by the primitive-layer decomposition in Lemma~\ref{lem:primitive-layer},
\begin{equation}\label{eq:main-layer-cover}
D_t(x)\cap D_t(y)
=
\left( \bigcup_{z\in D_s(x)\cap D_s(y)}D_{t-s}(z) \right)
\cup \left(
\bigcup_{j=s+1}^{t}\ \bigcup_{z\in\mathcal L_j(x,y)}D_{t-j}(z) \right).
\end{equation}

We first estimate the contribution of the first union. For every
$z\in D_s(x)\cap D_s(y)$, since $z\preccurlyeq x$, deleting symbols cannot increase the
number of runs, and therefore
\[
r(z)\le r(x)\le \gamma n.
\]
By Lemma \ref{lem:ballsize},
\[
|D_{t-s}(z)|
\le
\binom{\lfloor\gamma n\rfloor+t-s-1}{t-s}
=
\frac{\gamma^{t-s}}{(t-s)!}n^{t-s}
+
O_{s,t}\!\left(n^{t-s-1}\right).
\]
Since $|D_s(x)\cap D_s(y)|\le m$, the first union in
\eqref{eq:main-layer-cover} contributes at most
\[
\frac{m\gamma^{t-s}}{(t-s)!}n^{t-s}
+
O_{s,t,m}\!\left(n^{t-s-1}\right).
\]

It remains to bound the second union. By Lemma~\ref{lem:primitive-layer},
\[
|\mathcal L_j(x,y)|\le \binom{2j}{j}.
\]
Together with the trivial estimate
\[
|D_{t-j}(z)|\le \binom{n-j}{t-j},
\]
we get
\[
\sum_{j=s+1}^{t}\sum_{z\in\mathcal L_j(x,y)}
|D_{t-j}(z)|
\le
\sum_{j=s+1}^{t}
\binom{2j}{j}\binom{n-j}{t-j}
=
O_{s,t}\!\left(n^{t-s-1}\right).
\]
Combining the two estimates gives
\[
|D_t(x)\cap D_t(y)|
\le
\frac{m\gamma^{t-s}}{(t-s)!}n^{t-s}
+
O_{s,t,m}\!\left(n^{t-s-1}\right),
\]
as claimed.
\end{proof}

\section{Proof of Theorem \ref{thm:main}}\label{sec:lower}

We begin with a common-suffix cancellation property, which is a $q$-ary generalization of \cite[Lemma~23]{Chrisnata-22-IT}.

\begin{lemma}\label{lem:suffix-cancellation}
Let $u,v\in\Sigma_q^L$ be such that
$D_{s-1}(u)\cap D_{s-1}(v)=\varnothing$.
Then, for every sequence $w$,
\[
D_s(uw)\cap D_s(vw)
=
\{cw:\ c\in D_s(u)\cap D_s(v)\}.
\]
\end{lemma}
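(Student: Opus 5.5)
The inclusion ``$\supseteq$'' is immediate: if $c\preccurlyeq u$ and $c\preccurlyeq v$ with $|c|=L-s$, then $cw\preccurlyeq uw$ and $cw\preccurlyeq vw$, and $|cw|=|uw|-s$. The substance is the reverse inclusion. Take $z\in D_s(uw)\cap D_s(vw)$ and fix embeddings of $z$ into $uw$ and into $vw$. I would prove two things in turn: that the embedding into $uw$ uses exactly $L-s$ positions of $u$, and the same for $v$; and then that $z$ splits as $z=cw$ with $c\in D_s(u)\cap D_s(v)$.

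For the first point, write $z=z_1z_2$, where $z_1$ is the part embedded into $u$ and $z_2$ the part embedded into $w$, using a \emph{leftmost} (greedy) embedding into $uw$. The greedy choice matters: then $z_1$ is the longest prefix of $z$ that is a subsequence of $u$, and $z_2\preccurlyeq w$. Define $z=z_1'z_2'$ in the same way for $vw$. Since at most $s$ symbols are deleted in total, $|z_1|\ge L-s$ and $|z_1'|\ge L-s$.

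Now compare the two prefixes. Whichever is shorter, say of length $\ell=\min(|z_1|,|z_1'|)$, is a prefix of the other. Hence the first $\ell$ symbols of $z$ form a common subsequence of $u$ and $v$. The hypothesis $D_{s-1}(u)\cap D_{s-1}(v)=\varnothing$ is equivalent (by the remark on LCS) to $\operatorname{LCS}(u,v)\le L-s$, so $\ell\le L-s$. Combined with the lower bound, $\ell=L-s$. Without loss of generality $|z_1|=L-s$. This forces all $s$ deletions to fall in $u$, so $z_2=w$ exactly. It follows that $|z|=L-s+|w|$, and that $z$ ends with $w$.

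For the second point, take $c$ to be the length-$(L-s)$ prefix of $z$; we already know $c=z_1\preccurlyeq u$. For the other side, $|z_1'|\ge L-s=|c|$, so $c$ is a prefix of $z_1'$, and $z_1'\preccurlyeq v$ gives $c\preccurlyeq v$. Therefore $c\in D_s(u)\cap D_s(v)$ and $z=cw$, which gives ``$\subseteq$''.

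The main obstacle is the bookkeeping of how deletions split between the seed and the tail: a priori $z$ could consume part of $w$ differently in $uw$ and in $vw$. The step that resolves this is the greedy-prefix argument, which lets the LCS bound be applied to a common prefix of $z$ rather than to arbitrary embeddings.
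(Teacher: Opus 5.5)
Your proof is correct, but it is organized differently from the paper's.

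The paper first handles a single appended symbol $a$. If $z\in D_s(ua)\cap D_s(va)$ did not end with $a$, no embedding of $z$ could use the final position. Then $z$ would be a common subsequence of $u$ and $v$ of length $L-s+1$, i.e.\ $z\in D_{s-1}(u)\cap D_{s-1}(v)$, a contradiction. Hence $z=z'a$ with $z'\in D_s(u)\cap D_s(v)$, and a general $w$ is obtained by appending one symbol at a time. Each step of that argument is very short. However, the iteration tacitly requires the hypothesis to persist for the extended seeds $uw',vw'$, where $w'$ ranges over prefixes of $w$. This is true, because $\operatorname{LCS}(ua,va)\le\operatorname{LCS}(u,v)+1$, but the paper does not state it.

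You instead treat an arbitrary $w$ in one pass. Your key fact is that the leftmost embedding places into $u$ exactly the longest prefix of $z$ that is a subsequence of $u$, and you justify this correctly. It makes the two seed-parts $z_1,z_1'$ nested prefixes of $z$. A single application of $\operatorname{LCS}(u,v)\le L-s$ then pins the shorter one to length $L-s$ and forces the tail to be all of $w$. This avoids both the induction and the unstated persistence check, at the cost of needing the greedy-embedding property. Both arguments rest on the same underlying fact: no common subsequence of the seeds can be longer than $L-s$.
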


\begin{proof}
It suffices to prove the claim when $w$ is a single symbol; the general
case then follows by iteration.

The inclusion ``$\supseteq$'' is immediate. For the reverse inclusion,
take $w\in\Sigma_q$ and $z\in D_s(uw)\cap D_s(vw)$. If $z$ does not end
with $w$, then the final symbol $w$ must have been deleted from both
$uw$ and $vw$, and hence $z\in D_{s-1}(u)\cap D_{s-1}(v)$, a
contradiction. Thus $z=z'w$, where necessarily
$z'\in D_s(u)\cap D_s(v)$. This proves the reverse inclusion and
completes the proof.
\end{proof}

\begin{proposition}\label{prop:lower}
If $m$ is $(q,s)$-deletion-feasible, then
\[
\max_{\substack{x,y\in\Sigma_q^n\\
r(x),\,r(y)\le \gamma n\\
|D_s(x)\cap D_s(y)|\le m}}
|D_t(x)\cap D_t(y)|
\ge
\frac{m\gamma^{t-s}}{(t-s)!}n^{t-s}
+
O_{s,t,m}(n^{t-s-1}).
\]
\end{proposition}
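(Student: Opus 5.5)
We take a seed pair $u,v\in\Sigma_q^L$ witnessing feasibility, so that $D_{s-1}(u)\cap D_{s-1}(v)=\varnothing$ and $|D_s(u)\cap D_s(v)|=m$. We then append a common tail $w$ of length $n-L$ and set $x=uw$, $y=vw$. The tail $w$ is chosen to have exactly $R=\lfloor\gamma n\rfloor-L$ runs, or slightly fewer, with runs of roughly equal length, and with first symbol differing from the last symbols of $u$ and $v$ where possible. Since $r(uw)\le r(u)+r(w)\le L+R\le\gamma n$, the run constraint holds. By Lemma~\ref{lem:suffix-cancellation}, $D_s(x)\cap D_s(y)=\{cw: c\in D_s(u)\cap D_s(v)\}$, which has exactly $m$ elements. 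So the pair $(x,y)$ is admissible. When $\gamma n<L+1$ the statement is vacuous asymptotically, and since $\gamma$ is fixed this affects only finitely many $n$.

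For the lower bound on $|D_t(x)\cap D_t(y)|$, fix a single $c\in D_s(u)\cap D_s(v)$. For every $w'\in D_{t-s}(w)$, the word $cw'$ is a subsequence of both $uw$ and $vw$ of length $n-t$, so it lies in $D_t(x)\cap D_t(y)$. This already gives $|D_{t-s}(w)|$ elements. To gain the factor $m$, we use all $m$ choices of $c$: the map $(c,w')\mapsto cw'$ with $c\in D_s(u)\cap D_s(v)$ and $w'\in D_{t-s}(w)$ has image inside $D_t(x)\cap D_t(y)$. We need it to be injective, or nearly so. Since every $c$ has length $L-s$, each word $cw'$ splits uniquely as its first $L-s$ symbols followed by the rest, so the pair is recovered from the word. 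The map is therefore exactly injective, which gives
\[
|D_t(x)\cap D_t(y)|\ge m\,|D_{t-s}(w)|.
\]

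It remains to bound $|D_{t-s}(w)|$ from below. By Lemma~\ref{lem:ballsize}, $|D_{t-s}(w)|\ge\binom{R-(t-s)+1}{t-s}$, where $R=r(w)\ge\gamma n-L-1$. Expanding gives $\frac{\gamma^{t-s}}{(t-s)!}n^{t-s}+O_{s,t,L}(n^{t-s-1})$, and $L$ depends only on the seed, hence on $s$ and $m$ (through a fixed choice of witness). The lower bound in Lemma~\ref{lem:ballsize} requires each run of $w$ to have length at least $1$, which holds trivially. The only thing to check is that $w$ can have exactly about $\gamma n-L$ runs within length $n-L$; this is possible because $\gamma\le1$. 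The main obstacle is the bookkeeping here: the constraint must be met even for $\gamma=1$, where we need $r(uw)\le n$, and this holds automatically. Everything else is routine, so we expect no real difficulty beyond verifying the lemma's hypotheses and absorbing the constants into the error term.
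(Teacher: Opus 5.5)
Your proof is correct and follows essentially the same argument as the paper: a feasible seed pair $(u,v)$ with a common tail $w$ having about $\lfloor\gamma n\rfloor-L$ runs, suffix cancellation (Lemma~\ref{lem:suffix-cancellation}) for admissibility, the injective map $(c,w')\mapsto cw'$ (injective because every $c$ has length $L-s$) giving $|D_t(x)\cap D_t(y)|\ge m\,|D_{t-s}(w)|$, and the lower bound of Lemma~\ref{lem:ballsize} applied with $R=r(w)$. The extra conditions you put on $w$ (balanced run lengths, a first symbol differing from the last symbols of $u$ and $v$) are unnecessary but harmless.
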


\begin{proof}
Let $u,v\in\Sigma_q^L$ be a seed from Definition~\ref{def:feasible}, and write
\[
D_s(u)\cap D_s(v)=\{c_1,\ldots,c_m\}.
\]
Choose any sequence $w\in\Sigma_q^{n-L}$ satisfying $r(w)= \lfloor \gamma n\rfloor-L$, and set
\[
x=uw,
\qquad
y=vw.
\]
By definition, 
\[
  r(x)\leq \gamma n
\qquad
 r(y)\leq \gamma n.
\]
Moreover, by Lemma~\ref{lem:suffix-cancellation},
\[
D_s(x)\cap D_s(y)
=
\{c_1w,\ldots,c_mw\}.
\]
Thus $x$ and $y$ are admissible. 

For every $1\le i\le m$ and every $z\in D_{t-s}(w)$, the sequence
$c_i z$ belongs to $D_t(x)\cap D_t(y)$. Moreover, distinct pairs $(i,z)$ give distinct sequences
$c_i z$. Hence
\[
|D_t(x)\cap D_t(y)|
\ge
m\,|D_{t-s}(w)|.
\]
By Lemma \ref{lem:ballsize},
\[
  |D_{t-s}(w)|\geq \binom{\lfloor \gamma n \rfloor-L-t+s+1}{t-s}= \frac{\gamma^{t-s}}{(t-s)!}n^{t-s}
+
O_{s,t}(n^{t-s-1}).
\]
Therefore
\[
|D_t(x)\cap D_t(y)|
\ge
\frac{m\gamma^{t-s}}{(t-s)!}n^{t-s}
+
O_{s,t,m}(n^{t-s-1}),
\]
which gives the desired lower bound.
\end{proof}

\begin{proof}[Proof of Theorem~\ref{thm:main}]
The upper bound follows from Theorem~\ref{thm:run-upper}, and the lower
bound follows from Proposition~\ref{prop:lower}.
\end{proof}

\section{Proofs of Theorems \ref{thm:finite-insertion-bound} and~\ref{thm:insertion-main}}\label{sec:insertion}

We prove the insertion analogue by following the same primitive-layer
strategy used in Section~\ref{sec:layers}, with common subsequences
replaced by common supersequences. Recall that, for a sequence
$x\in\Sigma_q^n$,
\[
I_t(x)=\{w\in\Sigma_q^{n+t}: x\preccurlyeq w\}.
\]

\begin{definition}\label{def:minimal-common-supersequences}
Let $u$ and $v$ be sequences and let $N\ge0$. A sequence $w$ is a
\emph{minimal common supersequence} of $u$ and $v$ of length $N$ if
$u\preccurlyeq w$, $v\preccurlyeq w$, and there is no common
supersequence $w^-$ of $u$ and $v$ of length $N-1$ such that
$w^-\preccurlyeq w$. Let $\mathcal S_N(u,v)$ be the set of all such sequences.
\end{definition}

\begin{remark}
A minimal common supersequence need not be a shortest common supersequence.
\end{remark}

\begin{lemma}\label{lem:minimal-common-supersequences}
Let $a,b,N$ be nonnegative integers, and let $u,v$ be sequences satisfying
\[
|u|=N-a,
\qquad
|v|=N-b.
\]
Then
\[
|\mathcal S_N(u,v)|\le \binom{a+b}{a}.
\]
\end{lemma}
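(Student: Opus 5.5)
We mirror the proof of Lemma~\ref{lem:maximal-common-subsequences}, now peeling off the \emph{last} symbols of $u$, $v$ and $w$ and inducting on $|u|+|v|$.

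\textbf{Base cases.} If $a=0$, then $|u|=N$. Any common supersequence of length $N$ must equal $u$, so $|\mathcal S_N(u,v)|\le 1=\binom{b}{0}$. The case $b=0$ is symmetric. If $u$ or $v$ is empty, say $u$, then a minimal common supersequence of length $N$ is a minimal supersequence of $v$ alone. It therefore has length $|v|$, which forces $a=N-|u|=N$ and $b=0$, a case already handled. So assume $a,b\ge1$ and both sequences are nonempty.

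\textbf{Inductive step.} Write $u=u'\alpha$ and $v=v'\beta$, and let $w=w'\gamma\in\mathcal S_N(u,v)$. We use the greedy (rightmost) embedding, which gives the following facts.
\begin{itemize}[leftmargin=*]
\item If $\gamma\ne\alpha$ and $\gamma\ne\beta$, then $u,v\preccurlyeq w'$, contradicting minimality. So $\gamma\in\{\alpha,\beta\}$.
\item If $\alpha=\beta=\gamma$, then $u',v'\preccurlyeq w'$, and $w'$ must be a minimal common supersequence of $u',v'$. Indeed, a shorter common supersequence $w^-\preccurlyeq w'$ of $u',v'$ would give the shorter common supersequence $w^-\gamma\preccurlyeq w$ of $u,v$. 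Hence $w'\in\mathcal S_{N-1}(u',v')$, with the same parameters $a,b$, and so $|\mathcal S_N(u,v)|\le\binom{a+b}{a}$.
\end{itemize}

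If $\alpha\ne\beta$, we partition according to $\gamma$. When $\gamma=\alpha$, we have $u'\preccurlyeq w'$ and $v\preccurlyeq w'$, since $\gamma\ne\beta$ means the last symbol of $w$ is not needed to embed $v$. We claim $w'\in\mathcal S_{N-1}(u',v)$. Suppose some $w^-\preccurlyeq w'$ of length $N-2$ contains $u'$ and $v$. Then $w^-\alpha$ contains $u$ and $v$, and $w^-\alpha\preccurlyeq w$ with length $N-1$, contradicting minimality. The parameters are $|u'|=(N-1)-a$ and $|v|=(N-1)-(b-1)$, so the induction hypothesis bounds this part by $\binom{a+b-1}{a}$. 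Symmetrically, the case $\gamma=\beta$ contributes at most $\binom{a+b-1}{a-1}$. Since the map $w\mapsto w'$ is injective on each part, Pascal's rule gives $\binom{a+b}{a}$ in total.

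\textbf{Main obstacle.} The delicate point is justifying the reductions $u'\preccurlyeq w'$ and $v\preccurlyeq w'$ in the case $\gamma=\alpha\ne\beta$. This follows from the standard fact that if $u\preccurlyeq w'\gamma$, then either $u\preccurlyeq w'$, or $u$ ends in $\gamma$ and $u'\preccurlyeq w'$. The same fact applied to $v$ gives $v\preccurlyeq w'$ when $\gamma\ne\beta$, and applied to $u$ gives $u'\preccurlyeq w'$ when $\gamma=\alpha$.

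A second point needs care: the subcases must have valid parameters, in particular $b-1\ge0$ and $|u'|\ge0$. Both hold because $a,b\ge1$ and the sequences are nonempty.

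Finally, the induction must be set up uniformly in $N$, which is legitimate because each reduction decreases $|u|+|v|$. The induction hypothesis is applied to a statement with $N$ replaced by $N-1$.
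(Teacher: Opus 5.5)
Your proof is correct and is essentially the paper's argument mirrored: the paper peels off the \emph{first} symbols instead of the last, and splits into the same cases, namely $\alpha=\beta$ (injection into $\mathcal S_{N-1}(u',v')$) and $\alpha\neq\beta$ (injections into $\mathcal S_{N-1}(u',v)$ and $\mathcal S_{N-1}(u,v')$, followed by Pascal's rule). Your explicit treatment of the case where $u$ or $v$ is empty, which forces $b=0$ or $a=0$, is a small point that the paper's base cases leave implicit.
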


\begin{proof}
We argue by induction on $|u|+|v|$. The cases $N=0$, $a=0$, or $b=0$
are immediate. Assume $N,a,b\ge1$, and write
\[
u=\alpha u',
\qquad
v=\beta v',
\]
where $\alpha$ and $\beta$ are the first symbols of $u$ and $v$, respectively.

First suppose that $\alpha=\beta$. Then every sequence in
$\mathcal S_N(u,v)$ starts with $\alpha$; otherwise deleting its first
symbol would still leave a common supersequence of $u$ and $v$. Hence
removing this first symbol gives an injection from $\mathcal S_N(u,v)$
into $\mathcal S_{N-1}(u',v')$. By the induction hypothesis,
\[
|\mathcal S_N(u,v)|
\le
|\mathcal S_{N-1}(u',v')|
\le
\binom{a+b}{a}.
\]

It remains to consider the case $\alpha\ne\beta$. A sequence in
$\mathcal S_N(u,v)$ must start with either $\alpha$ or $\beta$, otherwise deleting its first symbol would still leave a common supersequence of $u$ and $v$. If it
starts with $\alpha$, then deleting this first symbol gives an element of
$\mathcal S_{N-1}(u',v)$; if it starts with $\beta$, then deleting this
first symbol gives an element of $\mathcal S_{N-1}(u,v')$. Therefore, by
induction,
\[
|\mathcal S_N(u,v)|
\le
|\mathcal S_{N-1}(u',v)|+|\mathcal S_{N-1}(u,v')|
\le
\binom{a+b-1}{a}
+
\binom{a+b-1}{a-1}
=
\binom{a+b}{a}.
\]
This completes the proof.
\end{proof}

We now introduce the insertion analogue of the primitive-layer decomposition.

\begin{definition}\label{def:insertion-primitive-layer}
For $x,y\in\Sigma_q^n$ and $0\le j\le t$, define
\[
\mathcal B_j(x,y)
:=
I_j(x)\cap I_j(y)
\setminus
\bigcup_{z\in I_{j-1}(x)\cap I_{j-1}(y)} I_1(z),
\]
where $I_{-1}(x)\cap I_{-1}(y)=\varnothing$ by convention.
\end{definition}

\begin{lemma}[Primitive-layer decomposition]\label{lem:insertion-primitive-layer}
For any $x,y\in\Sigma_q^n$ and $0\le j\le t$,
\[
\mathcal B_j(x,y)=\mathcal S_{n+j}(x,y).
\]
Consequently,
\[
|\mathcal B_j(x,y)|\le \binom{2j}{j}.
\]
Moreover, if $1\le s\le t$, then
\[
I_t(x)\cap I_t(y)
=
\left(
\bigcup_{z\in I_s(x)\cap I_s(y)} I_{t-s}(z)
\right)
\cup
\left(
\bigcup_{j=s+1}^{t}\ \bigcup_{z\in\mathcal B_j(x,y)} I_{t-j}(z)
\right).
\]
\end{lemma}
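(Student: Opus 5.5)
I will mirror the proof of Lemma~\ref{lem:primitive-layer}, with subsequences replaced by supersequences and deletion of one symbol replaced by insertion of one symbol. There are three pieces: the identification of layers with minimal common supersequences, the counting bound, and the decomposition.

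\emph{Step 1: the identification $\mathcal B_j(x,y)=\mathcal S_{n+j}(x,y)$.} Take $w\in I_j(x)\cap I_j(y)$, a common supersequence of $x$ and $y$ of length $n+j$. Then $w\notin\mathcal B_j(x,y)$ if and only if $w\in I_1(z)$ for some $z\in I_{j-1}(x)\cap I_{j-1}(y)$. This says exactly that some common supersequence $z$ of length $n+j-1$ satisfies $z\preccurlyeq w$. Comparing with Definition~\ref{def:minimal-common-supersequences} with $N=n+j$, this is the negation of minimality. Hence $\mathcal B_j(x,y)=\mathcal S_{n+j}(x,y)$. For $j=0$ both sides equal $\{x\}\cap\{y\}$, and the convention $I_{-1}=\varnothing$ is consistent with this.

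\emph{Step 2: the bound.} Apply Lemma~\ref{lem:minimal-common-supersequences} with $N=n+j$ and $a=b=j$, using $|x|=|y|=n=N-j$. This gives $|\mathcal B_j(x,y)|\le\binom{2j}{j}$.

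\emph{Step 3: the decomposition.} The inclusion ``$\supseteq$'' holds by transitivity of $\preccurlyeq$. Indeed, if $z$ is a common supersequence of $x,y$ of length $n+j$ and $w\in I_{t-j}(z)$, then $w\in I_t(x)\cap I_t(y)$. The same applies to $z\in I_s(x)\cap I_s(y)$.

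For ``$\subseteq$'', take $w\in I_t(x)\cap I_t(y)$. If some $z\in I_s(x)\cap I_s(y)$ satisfies $z\preccurlyeq w$, then $w\in I_{t-s}(z)$ and $w$ lies in the first union. Otherwise, let $j$ be the smallest index with $s<j\le t$ for which there is a common supersequence $z$ of $x,y$ of length $n+j$ with $z\preccurlyeq w$. Such a $j$ exists because $j=t$, $z=w$ works. By minimality, no common supersequence of length $n+j-1$ lies below $z$, since it would also lie below $w$. If $j-1>s$ this contradicts the minimality of $j$. If $j-1=s$ it contradicts the case assumption. Hence $z\in\mathcal B_j(x,y)$ and $w\in I_{t-j}(z)$.

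\emph{Main obstacle.} The only delicate point is in Step 3: we need that any common supersequence of length $n+j-1$ lying below $z$ also lies below $w$. This follows from transitivity of $\preccurlyeq$. The rest is a direct transcription of the deletion case, so I expect no real difficulty.
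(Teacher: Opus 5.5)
Your proposal is correct and follows the paper's proof essentially step for step. You identify $\mathcal B_j(x,y)$ with $\mathcal S_{n+j}(x,y)$ by negating minimality, apply Lemma~\ref{lem:minimal-common-supersequences} with $N=n+j$ and $a=b=j$, and obtain the decomposition by choosing the smallest $j>s$. Your explicit split into the cases $j-1>s$ and $j-1=s$ spells out what the paper compresses into ``by the minimality of $j$''.
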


\begin{proof}
A sequence $w\in I_j(x)\cap I_j(y)$ is not in $\mathcal B_j(x,y)$ if and
only if $w\in I_1(z)$ for some $z\in I_{j-1}(x)\cap I_{j-1}(y)$.
Equivalently, $w$ contains a common supersequence of $x$ and $y$ of
length $n+j-1$. Thus $\mathcal B_j(x,y)$ is exactly the set of minimal
common supersequences of $x$ and $y$ of length $n+j$, namely
\[
\mathcal B_j(x,y)=\mathcal S_{n+j}(x,y).
\]
Applying Lemma~\ref{lem:minimal-common-supersequences} with $N=n+j$ and
$a=b=j$ gives
\[
|\mathcal B_j(x,y)|
=
|\mathcal S_{n+j}(x,y)|
\le
\binom{2j}{j}.
\]

It remains to prove the decomposition. 
The inclusion ``$\supseteq$'' is immediate. For the reverse inclusion,
take $w\in I_t(x)\cap I_t(y)$. If $w$ contains some sequence of
$I_s(x)\cap I_s(y)$, then $w$ belongs to the first union. Otherwise, choose the
smallest $j>s$ for which there exists $z\in I_j(x)\cap I_j(y)$ with
$z\preccurlyeq w$. Such a $j$ exists by taking $j=t$ and $z=w$. By the
minimality of $j$, the chosen $z$ cannot be obtained by inserting one
symbol into a sequence of $I_{j-1}(x)\cap I_{j-1}(y)$; hence $z\in\mathcal B_j(x,y)$.
Since $|z|=n+j$ and $|w|=n+t$, we have $w\in I_{t-j}(z)$. Hence $w$ belongs to the second union. This proves the
reverse inclusion and completes the proof.
\end{proof}

\begin{proof}[Proof of Theorem~\ref{thm:finite-insertion-bound}]
Fix $x,y\in\Sigma_q^n$ satisfying $|I_s(x)\cap I_s(y)|\le m$, by the primitive-layer decomposition in Lemma~\ref{lem:insertion-primitive-layer},
\begin{equation}\label{eq:insertion-layer-cover}
I_t(x)\cap I_t(y)
=
\left(
\bigcup_{z\in I_s(x)\cap I_s(y)} I_{t-s}(z)
\right)
\cup
\left(
\bigcup_{j=s+1}^{t}\ \bigcup_{z\in\mathcal B_j(x,y)} I_{t-j}(z)
\right).
\end{equation}
For a sequence $z$ of length $M$, we use the standard formula
\[
|I_h(z)|=\sum_{i=0}^{h}\binom{M+h}{i}(q-1)^i.
\]
Since $|I_s(x)\cap I_s(y)|\leq m$ and each $z\in I_s(x)\cap I_s(y)$ has length $n+s$, the first union in
\eqref{eq:insertion-layer-cover} contributes at most
\[
m\sum_{i=0}^{t-s}\binom{n+t}{i}(q-1)^i= 
\frac{m(q-1)^{t-s}}{(t-s)!}n^{t-s}
+
O_{q,s,t,m}\!\left(n^{t-s-1}\right).
\]
For the second union, Lemma~\ref{lem:insertion-primitive-layer} gives
$|\mathcal B_j(x,y)|\le\binom{2j}{j}$. Hence
\[
\sum_{j=s+1}^{t}\sum_{z\in\mathcal B_j(x,y)} |I_{t-j}(z)|
\le
\sum_{j=s+1}^{t}\binom{2j}{j}
\sum_{i=0}^{t-j}\binom{n+t}{i}(q-1)^i
=
O_{q,s,t}\!\left(n^{t-s-1}\right).
\]
Combining the two estimates yields
\[
|I_t(x)\cap I_t(y)|
\le
\frac{m(q-1)^{t-s}}{(t-s)!}n^{t-s}
+
O_{q,s,t,m}\!\left(n^{t-s-1}\right),
\]
as claimed.
\end{proof}

We next prove the matching lower bound for insertion-feasible values.

\begin{lemma}\label{lem:insertion-suffix-cancellation}
Let $u,v\in\Sigma_q^L$ be such that $I_{s-1}(u)\cap I_{s-1}(v)=\varnothing$.
Then, for every sequence $w$,
\[
I_s(uw)\cap I_s(vw)
=
\{cw:\ c\in I_s(u)\cap I_s(v)\}.
\]
\end{lemma}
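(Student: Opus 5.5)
The plan is to mirror the proof of Lemma~\ref{lem:suffix-cancellation}: reduce to the case where $w$ is a single symbol $a\in\Sigma_q$, and then iterate. For the iteration to work, the hypothesis must be preserved when $u,v$ are replaced by $ua,va$.

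It is convenient to express the hypothesis through LCS. Since $|u|=|v|=L$, a shortest common supersequence of $u$ and $v$ has length $2L-\operatorname{LCS}(u,v)$. Hence $I_{s-1}(u)\cap I_{s-1}(v)=\varnothing$ is equivalent to $\operatorname{LCS}(u,v)\le L-s$; this matches the equivalence with the deletion condition recalled in the introduction. We also use $\operatorname{LCS}(ua,va)=\operatorname{LCS}(u,v)+1$, which holds because the common final symbol can always be matched. It follows that $\operatorname{LCS}(ua,va)\le (L+1)-s$. So the pair $(ua,va)$ of length $L+1$ again satisfies $I_{s-1}(ua)\cap I_{s-1}(va)=\varnothing$, and induction on $|w|$ reduces everything to the single-symbol case.

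Now fix $a\in\Sigma_q$. The inclusion ``$\supseteq$'' is immediate: if $u,v\preccurlyeq c$, then $ua,va\preccurlyeq ca$, and $|ca|=L+1+s$. For ``$\subseteq$'', take $z\in I_s(ua)\cap I_s(va)$, so $|z|=L+1+s$. There are two cases.

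\emph{Case 1: $z$ ends with $a$.} Write $z=z'a$. Take any embedding of $ua$ into $z$. Its last symbol $a$ can be re-mapped to the final position of $z$ without disturbing the rest, so the rest of the embedding shows $u\preccurlyeq z'$. Similarly $v\preccurlyeq z'$. Thus $z'\in I_s(u)\cap I_s(v)$ and $z=z'a$, as required.

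\emph{Case 2: $z$ does not end with $a$.} Then in every embedding of $ua$ (respectively $va$) into $z$, the final symbol $b\ne a$ of $z$ is unused. Deleting it gives a common supersequence $z''$ of $ua$ and $va$ of length $L+s$, so $z''\in I_{s-1}(ua)\cap I_{s-1}(va)$. This contradicts the preserved hypothesis. Equivalently, a shortest common supersequence of $ua$ and $va$ has length $2(L+1)-\operatorname{LCS}(ua,va)\ge L+s+1$, so $z''$ is too short.

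The only step needing care is Case 2. In contrast to the deletion proof, simply removing the matched $a$ from $z''$ does not directly yield a common supersequence of $u$ and $v$, since $ua$ and $va$ may match $a$ at different positions. The LCS/SCS length count above avoids this issue, and it is also exactly what makes the hypothesis survive the iteration.
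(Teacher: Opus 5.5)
Your proof is correct, and it shares the paper's skeleton: reduce to a single appended symbol $a$, then split on whether $z$ ends in $a$. Your Case~1 matches the paper's closing step, with the re-mapping justification the paper leaves out.

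The difference is in Case~2. The paper deletes the last symbol of $z$ \emph{and} an occurrence of the terminal symbol, landing directly in $I_{s-1}(u)\cap I_{s-1}(v)$. You delete only the last symbol, land in $I_{s-1}(ua)\cap I_{s-1}(va)$, and rule this out with the count $2(L+1)-\operatorname{LCS}(ua,va)\ge L+s+1$.

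Your route has a real advantage. The same computation shows that the hypothesis survives appending a common symbol. The iteration over $w$ genuinely needs this, and the paper's ``follows by iteration'' leaves it unverified.

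However, you overstate the obstacle you attribute to the direct route. Use the same re-mapping as in your Case~1: normalize both embeddings into $z''$ so that the terminal $a$ is matched at the last occurrence of $a$ in $z''$. Deleting that position then leaves a common supersequence of $u$ and $v$ of length $L+s-1$. So the paper's Case~2 is sound once the occurrence is chosen this way, and that step needs no LCS/SCS identity. It still needs some argument for preservation under iteration, which yours supplies.
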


\begin{proof}
It suffices to prove the claim when $w$ is a single symbol; the general
case then follows by iteration.

The inclusion ``$\supseteq$'' is immediate. For the reverse inclusion,
take $w\in\Sigma_q$ and $z\in I_s(uw)\cap I_s(vw)$. If $z$ does not end
with $w$, then deleting the last symbol of $z$ and one occurrence used to
realize the terminal $w$ leaves a sequence in $I_{s-1}(u)\cap I_{s-1}(v)$, a contradiction. Hence $z=z'w$, where
necessarily $z'\in I_s(u)\cap I_s(v)$. This proves the reverse inclusion and completes the proof.
\end{proof}

\begin{proposition}\label{prop:insertion-lower}
If $m$ is $(q,s)$-insertion-feasible, then
\[
\max_{\substack{x,y\in\Sigma_q^n\\
|I_s(x)\cap I_s(y)|\le m}}
|I_t(x)\cap I_t(y)|
\ge
\frac{m(q-1)^{t-s}}{(t-s)!}n^{t-s}
+
O_{q,s,t,m}(n^{t-s-1}).
\]
\end{proposition}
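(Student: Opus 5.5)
The plan mirrors the proof of Proposition~\ref{prop:lower}: append a common tail to a fixed seed pair, then count concatenations. Let $u,v\in\Sigma_q^L$ be a seed pair from Definition~\ref{def:insertion-feasible}, so that $I_{s-1}(u)\cap I_{s-1}(v)=\varnothing$, and write
\[
I_s(u)\cap I_s(v)=\{c_1,\ldots,c_m\},
\]
where each $c_i$ has length $L+s$. For $n\ge L$, choose any $w\in\Sigma_q^{n-L}$ and set $x=uw$ and $y=vw$. No run-count constraint appears here, so $w$ is arbitrary.

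\emph{Admissibility.} Lemma~\ref{lem:insertion-suffix-cancellation} gives
\[
I_s(x)\cap I_s(y)=\{c_1w,\ldots,c_mw\},
\]
which has exactly $m$ elements. Hence the pair $(x,y)$ satisfies $|I_s(x)\cap I_s(y)|\le m$ and lies in the range of the maximum.

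\emph{Counting.} For each $1\le i\le m$ and each $z\in I_{t-s}(w)$, consider the concatenation $c_iz$.
\begin{enumerate}[label=(\roman*)]
\item Since $u\preccurlyeq c_i$ and $w\preccurlyeq z$, we get $uw\preccurlyeq c_iz$.
\item The length of $c_iz$ is $(L+s)+(n-L+t-s)=n+t$, so $c_iz\in I_t(x)$.
\item By the same argument with $v$ in place of $u$, $c_iz\in I_t(y)$.
\item Every $c_i$ has the same length $L+s$, so the split point of $c_iz$ is fixed. Hence $c_iz=c_jz'$ forces $c_i=c_j$ and $z=z'$, and distinct pairs $(i,z)$ give distinct sequences.
\end{enumerate}
Therefore
\[
|I_t(x)\cap I_t(y)|\ \ge\ m\,|I_{t-s}(w)|.
\]

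By Levenshtein's formula, $|I_{t-s}(w)|$ depends only on the length $n-L$ of $w$. Keeping only the top term of the sum,
\[
|I_{t-s}(w)|=\sum_{i=0}^{t-s}\binom{n-L+t-s}{i}(q-1)^i\ \ge\ (q-1)^{t-s}\binom{n-L+t-s}{t-s}.
\]
Since $L$ is a constant depending only on the seed pair, this is
\[
\frac{(q-1)^{t-s}}{(t-s)!}n^{t-s}+O(n^{t-s-1}).
\]
Multiplying by $m$ gives the claimed lower bound.

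The only non-routine input is the suffix-cancellation lemma, which is what guarantees that appending the long tail does not enlarge the level-$s$ intersection beyond $m$; it is already available as Lemma~\ref{lem:insertion-suffix-cancellation}. Beyond that, the one step needing care is item (iv), the injectivity of $(i,z)\mapsto c_iz$, and it holds because all the $c_i$ have the common length $L+s$.
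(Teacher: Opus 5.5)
Your proposal is correct and follows essentially the same route as the paper's proof. Both append an arbitrary common tail $w$ to a seed pair, use the suffix-cancellation lemma for admissibility, and use the injection $(i,z)\mapsto c_iz$ to obtain $|I_t(x)\cap I_t(y)|\ge m\,|I_{t-s}(w)|$; your lower-bounding of $|I_{t-s}(w)|$ by its top term, and your explicit check of membership and injectivity, are only cosmetic differences.
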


\begin{proof}
Let $u,v\in\Sigma_q^L$ be a seed from
Definition~\ref{def:insertion-feasible}, and write
\[
I_s(u)\cap I_s(v)=\{c_1,\ldots,c_m\}.
\]
Choose any sequence $w\in\Sigma_q^{n-L}$ and set
\[
x=uw,
\qquad
y=vw.
\]
By Lemma~\ref{lem:insertion-suffix-cancellation},
\[
I_s(x)\cap I_s(y)=\{c_1w,\ldots,c_mw\}.
\]
Thus $x$ and $y$ are admissible.

For every $1\le i\le m$ and every $z\in I_{t-s}(w)$, the sequence
$c_i z$ belongs to $I_t(x)\cap I_t(y)$. Moreover, distinct pairs $(i,z)$ give distinct sequences
$c_i z$. Hence
\[
|I_t(x)\cap I_t(y)|
\ge
m\,|I_{t-s}(w)|.
\]
Since insertion balls are regular,
\[
|I_{t-s}(w)|
=
\sum_{j=0}^{t-s}\binom{n-L+t-s}{j}(q-1)^j
=
\frac{(q-1)^{t-s}}{(t-s)!}n^{t-s}
+O_{q,t,s,m}(n^{t-s-1}).
\]
Therefore
\[
|I_t(x)\cap I_t(y)|
\ge
\frac{m(q-1)^{t-s}}{(t-s)!}n^{t-s}
+O_{q,t,s,m}(n^{t-s-1}),
\]
which gives the desired lower bound.
\end{proof}

\begin{proof}[Proof of Theorem~\ref{thm:insertion-main}]
The upper bound follows from Theorem~\ref{thm:finite-insertion-bound},
and the lower bound follows from Proposition~\ref{prop:insertion-lower}.
\end{proof}

\section{Concluding Remarks}\label{sec:conclusion}

In this paper, we studied extremal intersections of deletion and
insertion balls. Our first result establishes a finite upper bound for
$|D_t(x)\cap D_t(y)|$ under the condition
$D_{s-1}(x)\cap D_{s-1}(y)=\varnothing$, strengthening a recent result of Pham, Goyal, and Kiah \cite{Pham-25-JCTA} in the binary case. We then introduced maximal common subsequences and a primitive-layer decomposition of common subsequences, and used them to derive asymptotically sharp bounds on
deletion-ball intersections under simultaneous run-count and lower-order
intersection constraints. Finally, we developed a parallel theory for
insertion balls and obtained asymptotically sharp bounds on
insertion-ball intersections under lower-order intersection constraints.
Our results yield direct lifting theorems for deletion and insertion
reconstruction codes, allowing reconstruction properties at radius $s$ to
be transferred to larger radii $t$.

Several open problems remain.

\begin{itemize}
  \item First, it would be desirable to determine the complete sets of
  $(q,s)$-deletion-feasible and $(q,s)$-insertion-feasible integers.
  It is known that $\binom{2s}{s}$ is both $(q,s)$-deletion-feasible \cite{Pham-25-JCTA,Alon-24-IT} and
$(q,s)$-insertion-feasible \cite{Sala-17-IT}. We conjecture that every integer
$m<\binom{2s}{s}$ is also both $(q,s)$-deletion-feasible and
$(q,s)$-insertion-feasible. For $s=1,2$, this conjecture is supported by
simple examples; see Tables~\ref{tab:deletion-feasible-small-s} and \ref{tab:insertion-feasible-small-s}. Developing a uniform construction, or a
general proof, for all such feasible integers would be an interesting
problem.

\begin{table}[t]
\centering
\renewcommand{\arraystretch}{1.15}
\caption{Examples for deletion-feasible integers when $s=1,2$. In each
row, $D_{s-1}(x)\cap D_{s-1}(y)=\varnothing$ and
$|D_s(x)\cap D_s(y)|=m$.}
\label{tab:deletion-feasible-small-s}
\begin{tabular}{c c c c p{0.54\textwidth}}
\hline
$s$ & $m$ & $x$ & $y$ & $D_s(x)\cap D_s(y)$ \\
\hline
$1$ & $1$ & $00$ & $10$
& $\{0\}$ \\

$1$ & $2$ & $01$ & $10$
& $\{0,1\}$ \\
\hline
$2$ & $1$ & $000$ & $011$
& $\{0\}$ \\

$2$ & $2$ & $001$ & $110$
& $\{0,1\}$ \\

$2$ & $3$ & $0011$ & $1010$
& $\{00,01,11\}$ \\

$2$ & $4$ & $0110$ & $1001$
& $\{00,01,10,11\}$ \\

$2$ & $5$ & $010110$ & $100101$
& $\{0010,0011,0101,1010,1011\}$ \\

$2$ & $6$ & $010101$ & $100110$
& $\{0010,0011,0110,1001,1010,1011\}$ \\
\hline
\end{tabular}
\end{table}

\begin{table}[t]
\centering
\renewcommand{\arraystretch}{1.15}
\caption{Examples for insertion-feasible integers when $s=1,2$. In each
row, $I_{s-1}(x)\cap I_{s-1}(y)=\varnothing$ and
$|I_s(x)\cap I_s(y)|=m$.}
\label{tab:insertion-feasible-small-s}
\begin{tabular}{c c c c p{0.54\textwidth}}
\hline
$s$ & $m$ & $x$ & $y$ & $I_s(x)\cap I_s(y)$ \\
\hline
$1$ & $1$ & $001$ & $100$
& $\{1001\}$ \\

$1$ & $2$ & $0$ & $1$
& $\{01,10\}$ \\
\hline
$2$ & $1$ & $00011$ & $10100$
& $\{1010011\}$ \\

$2$ & $2$ & $0011$ & $1100$
& $\{001100,110011\}$ \\

$2$ & $3$ & $0001$ & $1100$
& $\{011001,101001,110001\}$ \\

$2$ & $4$ & $00100$ & $10011$
& $\{0010011,1001001,1001010,1001100\}$ \\

$2$ & $5$ & $00010$ & $10011$
& $\{0100101,0100110,1000101,1000110,1001010\}$ \\

$2$ & $6$ & $00$ & $11$
& $\{0011,0101,0110,1001,1010,1100\}$ \\
\hline
\end{tabular}
\end{table}

  \item Second, the lower-order terms in Theorems~\ref{thm:main} and~\ref{thm:insertion-main} remain poorly understood. Determining explicit second-order terms, or more generally obtaining complete expansions for the corresponding extremal intersection sizes, is an important direction for future work.

  \item Finally, it would be interesting to study constructions
for $(n,q,N;D_s)$-reconstruction codes and
$(n,q,N;I_s)$-reconstruction codes in the regime
$N\le \binom{2s}{s}$. By the lifting theorems proved in this paper,
any such results would immediately yield reconstruction codes for larger
deletion or insertion radii. Existing works
\cite{Sun-26-IT,Sun-23-IT,Sun-25-IT-C,Chrisnata-22-IT,Cai-22-IT,Chee-18-IT}
have mainly focused on the cases $s\le 2$.
For the case of three deletions, Zhang, Ge, and Zhang
\cite{Zhang-24-ISIT} took an initial step by characterizing the structure
of pairs of sequences whose $3$-deletion balls have intersection size
$19$ or $20$, under the assumption that their $2$-deletion balls are
disjoint. Extending the
theory to the general case $s\ge 3$ remains an interesting direction for
future research.
\end{itemize}

\bibliographystyle{plain}
\bibliography{ref}

\end{document}